\documentclass[12pt]{article}

\usepackage{mathtools}
\usepackage{amsmath,amssymb,amsfonts,amsthm,cases,amscd,amsbsy}
\usepackage{graphicx,epsfig,extarrows}
\usepackage{url}
\usepackage[colorlinks=true,linkcolor=blue,citecolor=blue,urlcolor=blue]{hyperref}
\usepackage{caption,subcaption}
\usepackage{tikz}
\usepackage{tikz-cd}
\usepackage{diagbox}
\usepackage{indentfirst}
\usepackage{arydshln}
\usepackage{enumerate}
\usepackage{setspace}
\usepackage{pifont}
\usepackage{latexsym}
\usepackage{tabularx}
\usepackage{multirow}
\usepackage[normalem]{ulem}
\input xy
\usetikzlibrary{decorations.pathreplacing,decorations.markings}
\usetikzlibrary{cd}
\usetikzlibrary{shapes.geometric}
\usetikzlibrary{arrows,arrows.meta}
\usetikzlibrary{graphs,positioning}
\usetikzlibrary{matrix,decorations.pathmorphing}
\usetikzlibrary{math,calc,intersections,through,angles,arrows.meta,shapes.geometric,shadows,quotes,spy,datavisualization,datavisualization.formats.functions,plotmarks}
\tikzset{every picture/.style={samples=300,smooth,line join=round,thick,>=stealth}}

\numberwithin{equation}{section}

\theoremstyle{plain}
\newtheorem{theorem}{Theorem}[section]

\newtheorem{corollary}[theorem]{Corollary}

\newtheorem{lemma}[theorem]{Lemma}
\newtheorem{proposition}[theorem]{Proposition}

\theoremstyle{definition}
\newtheorem{definition}[theorem]{Definition}

\newtheorem{remark}[theorem]{Remark}

\newtheorem{assumption}[theorem]{Assumption}
\newtheorem{notation}[theorem]{Notation}

\def\XXint#1#2#3{{\setbox0=\hbox{$#1{#2#3}{\int}$}
		\vcenter{\hbox{$#2#3$}}\kern-.5\wd0}}

\DeclareMathSymbol{\subseteq}{\mathrel}{symbols}{"12}
\DeclareMathSymbol{\supseteq}{\mathrel}{symbols}{"13}
\DeclareMathSymbol{\subsetneq}{\mathrel}{AMSb}{"28}
\DeclareMathSymbol{\supsetneq}{\mathrel}{AMSb}{"29}
\DeclareMathSymbol{\nsubseteq}{\mathrel}{AMSb}{"2A}
\DeclareMathSymbol{\nsupseteq}{\mathrel}{AMSb}{"2B}
\DeclareMathDelimiter{\langle}{\mathop}{symbols}{"68}{largesymbols}{"0A}
\DeclareMathDelimiter{\rangle}{\mathclose}{symbols}{"69}{largesymbols}{"0B}

\def\pt{\partial}

\def\ra{\rightarrow}

\def\s{\subseteq}

\def\e{\epsilon}

\def\ol{\overline}

\def\vp{\varphi}
\def\lg{\langle}

\def\rg{\rangle}

\def\es{\emptyset}

\def\Om{\Omega}
\def\la{\lambda}
\def\al{\alpha}

\def\de{\delta}
\def\ga{\gamma}

\def\Ga{\Gamma}

\def\ts{\times}

\def\iy{\infty}

\def\f{\frac}

\def\Lra{\Leftrightarrow}

\def\Span{{\rm{span}}}

\def\df{\mathrm d}

\def\wh{\widehat}

\def\esssup{\operatorname*{ess\ \! sup}}

\def\hra{\hookrightarrow}

\def\mcA{\mathcal{A}}

\newcommand{\R}{\mathbb R}
\newcommand{\N}{\mathbb N}

\DeclareMathOperator{\Div}{div}

\DeclareMathOperator{\dist}{dist}
\DeclareMathOperator{\supp}{supp}

\title{Shape Design for a Class of Degenerate Parabolic Equations with Boundary Point Degeneracy and Its Application to Boundary Observability}

\author{
Dong-Hui Yang\thanks{School of Mathematics and Statistics, Central South University, Changsha 410083, China. Email: donghyang@outlook.com}
\and
Jie Zhong\thanks{Corresponding author. Department of Mathematics, California State University Los Angeles, Los Angeles, CA 90032, USA. Email: jiezhongmath@gmail.com}
}

\date{}

\begin{document}

\maketitle

\begin{abstract}
We study a class of degenerate parabolic equations with boundary point degeneracy in dimensions $N\geq 2$ and investigate the associated boundary observability problem by means of shape design. While one-dimensional degenerate models have been treated in the literature, the genuinely higher-dimensional case remains much more delicate because the degeneracy occurs at a boundary point and the boundary normal trace cannot be extracted directly near the singularity.

We approximate the degenerate equation by a family of uniformly parabolic problems on truncated domains obtained by removing a small neighborhood of the degenerate point. Under a geometric condition on the boundary, we establish uniform estimates for the approximate problems, prove convergence to the solution of the original degenerate equation, and identify the convergence of the boundary normal derivatives under additional regularity.

We then combine this approximation scheme with a parabolic Carleman estimate for the approximate backward equations and derive a boundary observability inequality for the limiting degenerate equation. In this way, we obtain a higher-dimensional parabolic counterpart of the shape-design program previously developed for degenerate hyperbolic equations.
\end{abstract}

\noindent\textbf{Keywords:} degenerate parabolic equations; boundary observability; shape design; boundary-point degeneracy; Carleman estimates.

\noindent\textbf{MSC2020:} 35K65, 93B07, 35K20.

\section{Introduction}\label{S1}
	
	Controllability of partial differential equations has been studied extensively in the literature \cite{FG,Fursikov,Lions,Rousseau}. The Hilbert Uniqueness Method (HUM), introduced by J.-L. Lions \cite{Lions}, shows that controllability is equivalent to observability for the corresponding adjoint system.
	
		Degenerate parabolic equations have also been widely studied from the viewpoint of controllability \cite{Alabau,Araruna,Buffe,Cannarsa1,Guo,Wu,Yang}. In the present paper, we consider the genuinely higher-dimensional case $N\geq 2$ in which the degeneracy occurs at a boundary point. This setting is much less understood than the one-dimensional case and requires a different analytical framework near the degenerate boundary.
	
		For both degenerate and non-degenerate parabolic equations, the classical approach to controllability is based on Carleman estimates \cite{Alabau,Araruna,Cannarsa1,FG,Fursikov,Guo,Wu,Yang}. Another well-known approach is the Lebeau--Robbiano strategy \cite{Buffe,Rousseau}, which also depends on suitable Carleman estimates. In higher-dimensional degenerate settings, however, it is often difficult to establish such estimates directly because the regularity near the degenerate boundary may be insufficient. In particular, integration by parts on the degenerate boundary may fail. For this reason, we employ the shape design method introduced in \cite{Guo}. The present work should be viewed as the parabolic counterpart of the shape-design line developed in our earlier study of degenerate hyperbolic equations \cite{YangHyper}. The point is not a formal transposition: in the present setting the relevant adjoint problem is backward parabolic, the key estimate is a parabolic Carleman inequality, and the limit passage must be carried out within the parabolic regularity framework.
	
		Most existing controllability results for degenerate parabolic equations concern operators for which only part of the principal coefficients degenerates. A typical example is $A=\mathrm{diag}(1,\dots,1,x_N^\alpha)$; see \cite{Cannarsa1}. There are also works on more general anisotropic degenerate operators such as $A=\mathrm{diag}(x_1^{\alpha_1},x_2^{\alpha_2})$, or related forms \cite{Araruna,Yang}. By contrast, the controllability problem for the fully isotropic principal operator $A=x_N^\alpha I_N$ (where $I_N=\mathrm{diag}(1,\dots,1)$ is the identity matrix) remains open.
	
	In this paper, we study a boundary observability problem for the degenerate parabolic equation \eqref{12.14.1}, whose degeneracy occurs at a boundary point. Although this setting may look simpler than the case $A=x_N^\alpha I_N$, the operator $A=|x|^\alpha I_N$ is completely degenerate at the point $0\in\mathbb{R}^N$. To handle this difficulty, we impose a geometric assumption on the domain; see Assumption \ref{Assumption (H)}. This assumption arises naturally from the Carleman-weight construction, and it is not clear whether it is optimal. Its role is crucial: it yields the favorable sign needed in the boundary term of the Carleman estimate and, consequently, allows the observability constants for the approximate problems to be chosen uniformly with respect to the approximation parameter.
	 
	We consider the following degenerate parabolic equation:
	\begin{equation}\label{12.14.1}
		\begin{cases}
			\partial_{t}y-\Div(|x|^\alpha \nabla y)=f, &\text{in }Q, \\
			y=0, &\text{on }  \pt Q, \\
			y(0)=y^0,   &\text{in }\Omega,
		\end{cases}
	\end{equation}
		where $\alpha\in (0,1)$ is a given constant, $\Omega\subset\mathbb{R}^N$ with $N\geq 2$ is a bounded domain such that $0\in\pt\Om$ and $\partial\Omega\in C^3$, $Q=\Omega\times (0,T)$ for some $T>0$, $\pt Q=\pt\Om \ts (0,T)$, $y^0\in L^2(\Om)$ is the initial datum, and $f\in L^2(Q)$.

\begin{assumption}\label{Assumption (H)}
	Let $\N=\{1,2,\cdots\}$. Denote $\Om\s B(0,M-1)$, and 
	\begin{equation*}
		w=|x|^\al, \quad \mcA y=-\Div(w\nabla y).
	\end{equation*}
		Thus equation \eqref{12.14.1} can be written as $\pt_t y+\mcA y=f$.
	Here and in what follows, we denote $B(x_0,r)=\{x\in\R^N\colon |x-x_0|<r\}$ for some $r>0$.   
	
	We assume that there exists $R_0>0$ such that 
	\begin{equation*}
		x\cdot \nu(x)\leq 0 \mbox{ for all }x\in \pt\Om\cap B(0,R_0),
	\end{equation*}
	where $\nu$ is the outer normal vector on $\pt\Om$. 
	Hence, for each $\de\in (0,\f{1}{8}R_0)$, there exists a bounded domain $\Om_\de$ of class $C^3$ such that $\Om_\de\s \Om$, $B(0,\de)\cap \Om_\de=\es$, and $\Om-\Om_\de\s B(0,2\de)$, and 
	\begin{equation*}
		x\cdot \nu(x)\leq 0 \mbox{ for all } x\in \pt\Om_\de \cap B(0,R_0),
	\end{equation*}  
	where $\nu$ is the outer normal vector on $\pt\Om_\de$. 
	Indeed, if we take $\wh\Om_\de=\Om-B(0,\de)$, then $x\cdot\nu(x)\leq 0$ for all $x\in \pt\wh \Om_\de\cap B(0,R_0)$, since $x\cdot\nu=-|x|$ on $\pt B(0,\de)\cap \Om$. The domain $\Om_\de$ can then be obtained by mollifying the boundary $\pt \wh\Om_\de \cap B(0,R_0)$. 
	
	Denote 
	\begin{equation}\label{01.12.1}
		\Ga^+=\{x\in\pt\Om\colon x\cdot \nu(x)>0\},
	\end{equation}
	where $\nu$ is the outer normal vector on $\pt \Om$. Then $\Ga^+\s \pt\Om-B(0,R_0)$.
\end{assumption} 

\begin{remark}\label{01.11.R2}
	The domain $\Om\s\R^2$ with boundary portion 
	\begin{equation*}
		\Om \cap B(0,1)=\Om\cap \left\{x\in B(0,1)\colon x_2>|x_1|^{16}\sin\f{1}{|x_1|}\right\},  
	\end{equation*}
	does not satisfy Assumption \ref{Assumption (H)}. 
\end{remark}

The main results of this paper are Theorems \ref{01.11.T3} and \ref{01.12.T2}. Our contribution is twofold. First, in Theorem \ref{01.11.T3}, we establish a shape-design approximation theorem for \eqref{12.14.1} by a sequence of uniformly parabolic equations and prove the convergence properties needed to recover the boundary observation in the higher-dimensional degenerate limit. Second, in Theorem \ref{01.12.T2}, we derive a boundary observability inequality for the backward equation associated with \eqref{12.14.1}. Taken together, these results provide a genuinely higher-dimensional parabolic extension of the shape-design framework.

The paper is organized as follows. In Section \ref{S2}, we collect the preliminary material for \eqref{12.14.1}. In Section \ref{S3}, we introduce the shape-design approximation and derive uniform estimates for the approximate uniformly parabolic equations. Section \ref{S4} is devoted to boundary observability for the approximate problems and to the passage to the limit, which yields the observability inequality for \eqref{12.14.1} stated in Theorem \ref{01.12.T2}.
	
	\section{Preliminaries}\label{S2}
	
	In this section, we collect several preliminary results for equation \eqref{12.14.1}. These include the weighted Sobolev spaces used in the paper, Hardy's inequality, the spectral properties of the operator $\mcA$, and the existence of weak solutions to \eqref{12.14.1}.

	\subsection{Solution Spaces}

	We define
	\begin{equation*}
		H^{1}(\Omega;w) = \left\{u \in L^2(\Omega) \colon \int_\Om w\nabla u\cdot \nabla u\df x<+\iy\right\}, 
	\end{equation*}
	where $\nabla u=(\f{\pt u}{\pt x_1},\cdots, \f{\pt u}{\pt x_N})$. The inner product and norm on $H^1(\Omega; w)$ are defined by
	\begin{equation*}
		(u, v)_{H^1(\Omega; w)} =\int_\Om uv\df x+\int_\Om (\nabla u\cdot \nabla v)w\df x, \quad \|u\|_{H^1(\Om;w)}=(u,u)_{H^1(\Om;w)}^\f{1}{2}, 
	\end{equation*} 
	respectively. 
	We also define
	\begin{equation*}
		H_{0}^1(\Omega; w) = \text{the closure of } \mathcal{D}(\Omega)\equiv C_0^\iy(\Om) \text{ in } H^1(\Omega; w). 
	\end{equation*} 
	We denote by $H^{-1}(\Omega; w)$ the dual space of $H_{0}^1(\Omega; w)$ with pivot space $L^2(\Om)$, which is a subspace of $\mathcal{D}'(\Omega)$.  
	
	Finally, we define
	\begin{equation*}
		H^2(\Om;w)=\left\{u\in H^1(\Om;w)\colon \int_\Om (\mcA u)^2\df x<+\iy\right\}.
	\end{equation*}
	Its inner product and norm are defined by
	\begin{equation*}
		(u,v)_{H^2(\Om;w)}=(u,v)_{H^1(\Om;w)}+\int_\Om (\mcA u)(\mcA v)\df x, \quad \|u\|_{H^2(\Om;w)}=(u,u)_{H^2(\Om;w)}^\f{1}{2}, 
	\end{equation*} 
	respectively.

	It is well known that the spaces
	\begin{equation*}
		H_0^1(\Om;w),\quad H^1(\Om;w), \mbox{ and } H^2(\Om;w)
	\end{equation*}
	are Hilbert spaces
	(see \cite{GC,Heinonen}).
	
	We denote 
	\begin{equation*}
		D(\mcA)=H^2(\Om;w)\cap H_0^1(\Om;w). 
	\end{equation*}
	
	\begin{lemma}\label{08.15.L1}
		Let $N \geq 2$ and $\alpha \in (0, 1)$. Then, for all $u \in H_0^1(\Omega; w)$, the following inequality holds:
		\begin{equation*}
			\int_\Om |x|^{\al-2}u^2\df x\leq \f{4}{(N-2+\al)^2}\int_\Om w\nabla u\cdot \nabla u\df x. 
		\end{equation*} 
	\end{lemma}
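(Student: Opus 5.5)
By density it suffices to prove the inequality for $u\in C_0^\infty(\Om)$. The definition of $H_0^1(\Om;w)$ makes such functions dense, and both sides pass to the limit: the right-hand side is continuous in the $H^1(\Om;w)$ norm, and Fatou's lemma applied to an a.e.\ convergent subsequence handles the left-hand side. We extend $u$ by zero to all of $\R^N$. Since $0\in\pt\Om$ and $u$ has compact support in $\Om$, $u$ vanishes near the origin. This removes every singularity issue at $x=0$ in the integrations by parts below.

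The core is the standard vector-field (divergence) identity. Set $\beta=\al-2$ and consider the field $F(x)=|x|^{\beta}x$. For $x\neq0$ a direct computation gives
\begin{equation*}
\Div F=(N+\beta)|x|^\beta=(N-2+\al)|x|^{\al-2}.
\end{equation*}
Multiplying by $u^2$ and integrating by parts, with no boundary terms because $u$ has compact support away from $0$, we get
\begin{equation*}
(N-2+\al)\int_\Om |x|^{\al-2}u^2\df x=-2\int_\Om |x|^{\al-2}u\,(x\cdot\nabla u)\df x .
\end{equation*}
Next we apply Cauchy--Schwarz, using $|x\cdot\nabla u|\le |x||\nabla u|$ and splitting $|x|^{\al-2}|x|=|x|^{(\al-2)/2}\cdot|x|^{\al/2}$:
\begin{equation*}
(N-2+\al)\int_\Om |x|^{\al-2}u^2\df x\le 2\Big(\int_\Om |x|^{\al-2}u^2\df x\Big)^{1/2}\Big(\int_\Om |x|^{\al}|\nabla u|^2\df x\Big)^{1/2}.
\end{equation*}
Note that $N-2+\al>0$ because $N\ge2$ and $\al>0$. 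Dividing by the square root of the finite left integral and squaring then gives the constant $4/(N-2+\al)^2$.

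The main point to check carefully is the approximation step, that is, that the inequality really extends from $C_0^\infty(\Om)$ to all of $H_0^1(\Om;w)$. Since the left-hand side involves the singular weight $|x|^{\al-2}$, one might worry that it is not controlled by the norm. However, the inequality on smooth functions itself shows that $u\mapsto |x|^{(\al-2)/2}u$ is bounded in $L^2$ with respect to the seminorm $\big(\int_\Om w|\nabla u|^2\df x\big)^{1/2}$. Hence for a sequence $u_n\to u$ in $H^1(\Om;w)$, the sequence $|x|^{(\al-2)/2}u_n$ is Cauchy in $L^2(\Om)$. Its limit must coincide with $|x|^{(\al-2)/2}u$, because $u_n\to u$ in $L^2(\Om)$, so a subsequence converges a.e. Passing to the limit then completes the proof.
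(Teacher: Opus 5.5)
Your proof is correct, but it takes a different route from the paper's.

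The paper never approximates by smooth functions. It takes a general $u\in H_0^1(\Om;w)$ and restricts it to $\Om_\e=\Om-\ol{B(0,\e)}$, where $u\in H^1(\Om_\e)$ because $w\ge\e^\al$ there. It then integrates $\Div(u^2|x|^{\al-2}x)$ over $\Om_\e$. This produces a boundary term on the inner sphere $\Om\cap\pt B(0,\e)$, which is simply discarded because $x\cdot\nu=-|x|<0$ there gives it the right sign. The part of $\pt\Om_\e$ lying on $\pt\Om$ contributes nothing since $u$ vanishes there. After Cauchy--Schwarz on $\Om_\e$, where $\int_{\Om_\e}|x|^{\al-2}u^2\df x$ is trivially finite, the paper lets $\e\to0$.

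You instead observe that $0\in\pt\Om$ forces every $C_0^\infty(\Om)$ function to vanish near the origin. The identity therefore holds with no boundary terms at all, and you extend it by density via Fatou, or via your Cauchy-sequence argument, which even yields convergence of the weighted norms.

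Your route is more elementary. It needs no divergence theorem on the truncated domain $\Om_\e$, which has corners where the sphere meets $\pt\Om$. It also needs no trace argument showing that $u$ vanishes on $\pt\Om\cap\pt\Om_\e$, a point the paper uses tacitly. The paper's route treats a general $u$ in one pass, without an approximating sequence. Its structure, excising a neighborhood of the degenerate point and exploiting the sign of $x\cdot\nu$ on the cut, anticipates the truncated domains $\Om_\de$ and the sign condition on $x\cdot\nu$ that drive the later shape-design and Carleman arguments.
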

	
	\begin{proof}
		Set $\Om_\e=\Om-\ol{B(0,\e)}$ with $\e>0$ sufficiently small. Let $u\in H_0^1(\Om;w)$. Note that $u\in H^1(\Om_\e)$ and
		\begin{equation*}
			\begin{split}
				2\int_{\Om_\e}|x|^{\al-2}z(x\cdot \nabla z)\df x
				&=\int_{\Om_\e} |x|^{\al-2}x\cdot \nabla z^2\df x\\
				&=\int_{\Om_\e} \Div(z^2|x|^{\al-2}x)\df x-\int_{\Om_\e}z^2\Div(|x|^{\al-2}x)\df x\\
				&=\int_{\Om\cap \pt B(0,\e)}z^2|x|^{\al-2}(x\cdot\nu)\df S-(N+\al-2)\int_{\Om_\e}z^2|x|^{\al-2}\df x, 
			\end{split}
		\end{equation*}
		then,  from $x\cdot\nu<0$ on $\Om\cap \pt B(0,\e)$, we get 
		\begin{equation*}
			\begin{split}
				(N+\al-2)\int_{\Om_\e}|x|^{\al-2}z^2\df x
				&\leq  (N+\al-2)\int_{\Om_\e}|x|^{\al-2}z^2\df x-\int_{\pt\Om_\e}|x|^{\al-2}z^2(x\cdot\nu)\df S\\
				&=-2\int_{\Om_\e}|x|^{\al-2}z(x\cdot\nabla z)\df x\leq 2\int_{\Om_\e}(|x|^\f{\al-2}{2}|z|)(|x|^\f{\al}{2}|\nabla z|)\df x\\
				&\leq 2\left(\int_{\Om_\e}|x|^{\al-2}z^2\df x\right)^\f{1}{2}\left(\int_{\Om_\e}|x|^\al |\nabla z|^2\df x\right)^\f{1}{2}, 
			\end{split}
		\end{equation*}
		and hence
		\begin{equation*}
			\begin{split}
				(N+\al-2)\left(\int_{\Om_\e}|x|^{\al-2}z^2\df x\right)^\f{1}{2}\leq 2\left(\int_{\Om_\e}|x|^\al |\nabla z|^2\df x\right)^\f{1}{2}.
			\end{split}
		\end{equation*}
		Letting $\e\ra 0$ and using $\Om_\e\ra \Om$, we obtain the desired estimate.
	\end{proof}
	
	\begin{remark}\label{08.16.R1}
		From Lemma \ref{08.15.L1},  we have
		\begin{equation}\label{08.16.8}
			\int_\Om u^2\df x \leq \f{4M^{2-\al}}{(N-2+\al)^2}\int_\Om w\nabla u\cdot \nabla u\df x,
		\end{equation}
		for all $u\in H_0^1(\Om;w)$. This is  a Poincar\'{e} inequality.
		In particular, the norm
		\begin{equation}\label{08.19.2}
			\|u\|_{H_0^1(\Omega; w)} = \left(\int_\Omega (\nabla u \cdot \nabla u) w \, \mathrm{d}x\right)^{\frac{1}{2}}
		\end{equation}
		is an equivalent norm in $H_0^1(\Omega; w)$. Hereafter, we use \eqref{08.19.2} to define the norm of $H_0^1(\Omega; w)$.
	\end{remark}
	 
	\begin{lemma}\label{08.15.L4}
		The embedding $H_0^1(\Omega; w) \hookrightarrow L^2(\Omega)$ is compact.
	\end{lemma}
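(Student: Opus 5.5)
The plan is to show that every bounded sequence in $H_0^1(\Om;w)$ has a subsequence converging in $L^2(\Om)$. Let $(u_n)$ satisfy $\|u_n\|_{H_0^1(\Om;w)}\leq C$. The idea is to split $\Om$ into a small neighbourhood of the degenerate point $0$, where the Hardy inequality of Lemma \ref{08.15.L1} makes the $L^2$ mass uniformly small, and the complement, where the weight is bounded below and the classical Rellich theorem applies.

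\emph{Smallness near the origin.} For $\e\in(0,1)$, Lemma \ref{08.15.L1} and $|x|^{2-\al}\leq \e^{2-\al}$ on $B(0,\e)$ give
\begin{equation*}
\int_{\Om\cap B(0,\e)}u_n^2\df x\leq \e^{2-\al}\int_{\Om}|x|^{\al-2}u_n^2\df x\leq \f{4\e^{2-\al}}{(N-2+\al)^2}C^2 .
\end{equation*}
Since $2-\al>0$, this tends to $0$ as $\e\to0$, uniformly in $n$.

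\emph{Compactness away from the origin.} On $\Om^\e:=\Om-\ol{B(0,\e)}$ we have $w\geq \e^\al$, so
\begin{equation*}
\int_{\Om^\e}|\nabla u_n|^2\df x\leq \e^{-\al}C^2 ,
\end{equation*}
and \eqref{08.16.8} bounds $\|u_n\|_{L^2}$. The sequence $(u_n)$ is therefore bounded in $H^1(\Om^\e)$.

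To apply Rellich cleanly despite the possibly irregular set $\pt B(0,\e)\cap\Om$, I will use the zero boundary values on $\pt\Om$. I take an approximating $C_0^\iy(\Om)$ function and multiply it by a cutoff $\chi_\e\in C^\iy(\R^N)$ with $\chi_\e=0$ on $B(0,\e/2)$, $\chi_\e=1$ outside $B(0,\e)$, and $|\nabla\chi_\e|\leq 4/\e$. Passing to the limit, $\chi_\e u_n\in H_0^1(\Om)$ with
\begin{equation*}
\|\nabla(\chi_\e u_n)\|_{L^2}\leq (2/\e)^{\al/2}C+(4/\e)\|u_n\|_{L^2}\leq C_\e ,
\end{equation*}
where I use $w\geq(\e/2)^\al$ on $\supp\chi_\e$. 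Rellich--Kondrachov on the bounded set $\Om$, applied to $H_0^1(\Om)$ with no boundary regularity needed, then gives a subsequence along which $(\chi_\e u_n)$ converges in $L^2(\Om)$.

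\emph{Diagonal argument.} I apply the previous step with $\e=1/k$ for $k=1,2,\dots$ and extract a diagonal subsequence, still denoted $(u_n)$, such that $(\chi_{1/k}u_n)_n$ is Cauchy in $L^2$ for every $k$. Then
\begin{equation*}
\|u_n-u_m\|_{L^2}\leq \|\chi_{1/k}(u_n-u_m)\|_{L^2}+\|(1-\chi_{1/k})(u_n-u_m)\|_{L^2},
\end{equation*}
and the last term is at most $2\sup_j\|u_j\|_{L^2(\Om\cap B(0,1/k))}\leq C' k^{-(2-\al)/2}$ by the first step. Choosing $k$ large and then $n,m$ large shows $(u_n)$ is Cauchy in $L^2(\Om)$, which proves the compactness.

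The main obstacle is the justification that the truncations $\chi_\e u_n$ lie in $H_0^1(\Om)$ with the stated bounds, since an element of $H_0^1(\Om;w)$ is only a limit of test functions in the weighted norm. I expect this to be routine: if $\vp_k\to u$ in $H_0^1(\Om;w)$, then $\chi_\e\vp_k\in C_0^\iy(\Om)$ is Cauchy in the unweighted $H^1$ norm, because the weight is bounded below on the support of $\chi_\e$. Its limit is therefore $\chi_\e u$ in $H_0^1(\Om)$.
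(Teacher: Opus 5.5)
Your proof is correct and follows the same strategy as the paper: Hardy's inequality (Lemma \ref{08.15.L1}) makes the $L^2$ mass of $u_n$ in $\Om\cap B(0,\e)$ uniformly $O(\e^{(2-\al)/2})$ in norm, and Rellich's theorem handles the region where $w\geq \e^{\al}$. The differences are technical. You apply Rellich--Kondrachov to the cutoffs $\chi_\e u_n\in H_0^1(\Om)$, which avoids the paper's unstated need for a compact embedding of $H^1(\Om-\ol{B(0,\e_0)})$ on a set whose boundary is not a priori regular where $\pt B(0,\e_0)$ meets $\pt\Om$. You also close with a diagonal Cauchy argument, whereas the paper first reduces to a weak limit $0$ and lets that fix the strong limit on $\Om-\ol{B(0,\e_0)}$.
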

	
\begin{proof}
	Let $\{u_n\}_{n\in\N}\s H_0^1(\Om;w)$ be a bounded sequence, i.e. $\|u_n\|_{H_0^1(\Om;w)}\leq C_0$ for all $n\in\N$ with some fixed constant $C_0>0$. Then there exist a subsequence of $\{u_n\}_{n\in\N}$, still denoted by $\{u_n\}_{n\in\N}$, and a function $u_0\in H_0^1(\Om;w)$ such that
	\begin{equation*}
		u_n\ra u_0 \mbox{ weakly in } H_0^1(\Om;w). 
	\end{equation*}
	We now prove that $u_n\ra u_0$ in $L^2(\Om)$. We may assume $u_0=0$ by replacing $u_n$ with $u_n-u_0$.
	
	Let $\de>0$. On the one hand, for $\e>0$ small enough, Lemma \ref{08.15.L1} gives
	\begin{equation*}
		\begin{split} 
			\int_{B(0,\e)} u_n^2\df x
			&=\int_{B(0,\e)} |x|^{2-\al}|x|^{\al-2}u_n^2\df x\leq \e^{2-\al}C^2\int_\Om |x|^\al |\nabla u_n|^2\df x\leq C^2C_0^2\e^{2-\al}, 
		\end{split} 
	\end{equation*} 
	where the positive constant $C$ is the constant in Lemma \ref{08.15.L1}. 
	Choose $\e_0>0$ such that for all $\e\in (0,\e_0]$ and all $n\in\N$ we have $\int_{\Om-\Om_\e}u_n^2\df x<\f{1}{4}\de^2$. On the other hand, we consider the sequence $\{u_n|_{\Om_{\e_0}}\}_{n\in\N}$. Since $|x|\geq \e_0$ for all $x\in \Om_{\e_0}$, it follows from
	\begin{equation*}
		\e_0^\al \int_{\Om_{\e_0}} |\nabla u|^2\df x\leq \int_{\Om_{\e_0}} |\nabla u|^2|x|^\al \df x\leq C_0,
	\end{equation*}
	we get $\{u_n|_{\Om_{\e_0}}\}_{n\in\N}\s H^1(\Om_{\e_0})$ is a bounded sequence. By the classical Sobolev compact embedding theorem, there exists a subsequence $\{u_{n_k}|_{\Om_{\e_0}}\}_{k\in\N}$ of  $\{u_n|_{\Om_{\e_0}}\}_{n\in\N}$  such that $u_{n_k}|_{\Om_{\e_0}}\ra 0$ strongly in $L^2(\Om_{\e_0})$.  Moreover, there exists $k_0\in\N$ such that for all $k\geq k_0$ we have 
	\begin{equation*}
		\left\|u_{n_k}|_{\Om_{\e_0}}\right\|_{L^2(\Om_{\e_0})}^2<\f{1}{4}\de^2. 
	\end{equation*}
	Combining the two estimates, we obtain the desired convergence in $L^2(\Om)$.
\end{proof}
	
\subsection{Spectrum}
	
From Remark \ref{08.16.R1} and Lemma \ref{08.15.L4}, we deduce that the degenerate partial differential operator $\mcA$ has discrete point spectrum
\begin{equation}\label{11.14.3}
	0<\la_1< \la_2\leq \la_3\leq  \cdots \ra +\iy, 
\end{equation}
that is, $\la_n\ (n\in\N)$ satisfies
\begin{equation}\label{12.12.8}
	\begin{cases}
		\mcA \Phi_n=\la_n \Phi_n, & \mbox{in }\Om, \\
		\Phi_n=0, &\mbox{in }\pt\Om. 
	\end{cases}
\end{equation}
Moreover, from \eqref{08.16.8},  we get
\begin{equation}\label{11.30.9}
	\la_1=\inf_{0\neq u\in H_0^1(\Om;w)}\f{\int_\Om w\nabla u\cdot \nabla u\df x}{\int_\Om u^2\df x}\geq \f{(N-2+\al)^2}{4M^{2-\al}}.
\end{equation}

\begin{notation} \label{12.11.N1}
We denote by $\Phi_n(x)$ the $n$th eigenfunction of $\mcA$ corresponding to the eigenvalue $\la_n\ (n\in\N)$. Then $\{\Phi_n\}_{n\in\N}$ is an orthonormal basis of $L^2(\Om)$ and an orthogonal subset of $H_0^1(\Om;w)$. See \cite[Theorem 7 (p.~728) in Appendix D]{Evans} or the proof of Lemma \ref{06.28.L1}.
\end{notation}

\begin{lemma}\label{06.28.L1}
	Let $u=\sum_{i=1}^\iy u_i \Phi_i\in H_0^1(\Om;w)$ with $u_i=(u,\Phi_i)_{L^2(\Om)}$ for all $i\in\N$. We have $\nabla u=\sum_{i=1}^\iy u_i\nabla \Phi_i$ and $\|u\|_{H_0^1(\Om;w)}=(\sum_{i=1}^\iy u_i^2\la_i)^\f{1}{2}$, and
	\begin{equation*}
		u\in H^2(\Om;w)\Lra \sum_{i=1}^\iy u_i^2\la_i^2<\iy,
	\end{equation*}
	and
	\begin{equation*}
		\mcA u=\sum_{i=1}^\iy u_i\la_i\Phi_i \mbox{ and } \|\mcA u\|_{L^2(\Om)}=\left(\sum_{i=1}^\iy u_i^2\la_i^2\right)^\f{1}{2}.
	\end{equation*}
\end{lemma}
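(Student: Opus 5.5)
We first set up the spectral framework through the bilinear form $a(u,v)=\int_\Om w\nabla u\cdot\nabla v\df x$ on $H_0^1(\Om;w)$. By Remark \ref{08.16.R1} this form is coercive, and by Lemma \ref{08.15.L4} the embedding $H_0^1(\Om;w)\hra L^2(\Om)$ is compact. So the solution operator $G\colon L^2(\Om)\to L^2(\Om)$, $f\mapsto u$ with $a(u,v)=(f,v)_{L^2}$ for all $v\in H_0^1(\Om;w)$, is compact, self-adjoint and positive. Its eigenfunctions $\Phi_i$ satisfy
\begin{equation*}
a(\Phi_i,v)=\la_i(\Phi_i,v)_{L^2} \quad \mbox{for all } v\in H_0^1(\Om;w),
\end{equation*}
and they form an orthonormal basis of $L^2(\Om)$. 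In particular $a(\Phi_i,\Phi_j)=\la_i\de_{ij}$, so $\{\Phi_i/\sqrt{\la_i}\}$ is orthonormal in $H_0^1(\Om;w)$ with the norm \eqref{08.19.2}.

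\textbf{Completeness in $H_0^1(\Om;w)$.} Suppose $v\in H_0^1(\Om;w)$ satisfies $a(v,\Phi_i)=0$ for all $i$. Then $\la_i(v,\Phi_i)_{L^2}=0$, so every $L^2$ coefficient of $v$ vanishes and $v=0$. Hence $\{\Phi_i/\sqrt{\la_i}\}$ is an orthonormal basis of $H_0^1(\Om;w)$.

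For $u\in H_0^1(\Om;w)$, the $H_0^1$-Fourier coefficient along $\Phi_i/\sqrt{\la_i}$ is $a(u,\Phi_i)/\sqrt{\la_i}=\sqrt{\la_i}\,u_i$. Therefore the partial sums $\sum_{i\le n}u_i\Phi_i$ converge to $u$ in $H_0^1(\Om;w)$, and
\begin{equation*}
\|u\|_{H_0^1(\Om;w)}^2=\sum_i\la_iu_i^2 .
\end{equation*}
Convergence in this norm means $|x|^{\al/2}\nabla(\cdot)$ converges in $L^2$. Since $|x|^{\al/2}>0$ a.e., after dividing by this weight we obtain $\nabla u=\sum_i u_i\nabla\Phi_i$. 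The series converges in $L^2(\Om;w)^N$, hence in $L^2_{loc}(\Om\setminus\{0\})$ and in $\mathcal D'$.

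\textbf{Characterizing $\mcA u$.} Here $\mcA u$ is understood in the distributional sense, $\lg\mcA u,\vp\rg=a(u,\vp)$ for $\vp\in C_0^\iy(\Om)$, and this extends by density to $v\in H_0^1(\Om;w)$ whenever $\mcA u\in L^2$.

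First assume $\sum_i u_i^2\la_i^2<\iy$ and set $g=\sum_i\la_iu_i\Phi_i\in L^2(\Om)$. For any $\vp\in C_0^\iy(\Om)$, the $H_0^1$-convergence of the partial sums gives
\begin{equation*}
a(u,\vp)=\sum_i u_i\,a(\Phi_i,\vp)=\sum_i\la_iu_i(\Phi_i,\vp)_{L^2}=(g,\vp)_{L^2}.
\end{equation*}
Thus $\mcA u=g\in L^2$, so $u\in H^2(\Om;w)$ and $\|\mcA u\|_{L^2}^2=\sum_i\la_i^2u_i^2$ by Parseval.

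Conversely, assume $\mcA u=g\in L^2(\Om)$. By density of $C_0^\iy(\Om)$ in $H_0^1(\Om;w)$, the identity $a(u,v)=(g,v)_{L^2}$ holds for all $v\in H_0^1(\Om;w)$. Testing with $v=\Phi_i$ gives
\begin{equation*}
(g,\Phi_i)_{L^2}=a(u,\Phi_i)=\la_iu_i .
\end{equation*}
Parseval for $g$ then yields $\sum_i\la_i^2u_i^2=\|g\|_{L^2}^2<\iy$, together with the series formula for $\mcA u$.

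\textbf{Main obstacle.} The key step is the density extension from $\vp\in C_0^\iy(\Om)$ to $v=\Phi_i$, since $\Phi_i$ is not compactly supported. This works because the pairing $v\mapsto a(u,v)-(g,v)_{L^2}$ is continuous on $H_0^1(\Om;w)$, using the Poincaré inequality \eqref{08.16.8}. This is exactly where the definition of $H_0^1(\Om;w)$ as a closure, rather than a trace condition at the degenerate point, is essential.
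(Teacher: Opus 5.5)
Your proof is correct and follows essentially the same route as the paper: completeness of $\{\la_i^{-1/2}\Phi_i\}$ in $H_0^1(\Om;w)$ by the same orthogonality argument, and the same test-function computation for the direction $\sum_i u_i^2\la_i^2<\iy\Rightarrow \mcA u=\sum_i u_i\la_i\Phi_i\in L^2(\Om)$. The only difference is the direction $u\in H^2(\Om;w)\Rightarrow\sum_i u_i^2\la_i^2<\iy$: the paper bounds partial sums by Cauchy--Schwarz against $\mcA\vp_n$, using the symmetry $\int_\Om(\mcA u)\Phi_i\df x=\int_\Om u\,\mcA\Phi_i\df x$, whereas you compute $(\mcA u,\Phi_i)_{L^2(\Om)}=\la_iu_i$ directly and apply Parseval --- and your density extension to $v=\Phi_i$ is exactly the justification the paper leaves implicit in that symmetry step.
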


\begin{proof}
	From \eqref{11.14.3}, we have
	\begin{equation}\label{06.04.3}
		\int_\Om (\nabla \Phi_k\cdot\nabla \Phi_l)w\df x=\de_{kl}\la_k \mbox{ for } k,l\in \N,
	\end{equation}
	where $\de_{kl}$ is the Kronecker delta function, defined as $\de_{kl}=1$ for $k=l$ and $\de_{kl}=0$ for $k\neq l$.
		We now show that $\{\la_k^{-\f{1}{2}}\Phi_k\}_{k=1}^\iy$ forms an orthonormal basis of $H_0^1(\Om;w)$.
		From \eqref{06.04.3}, it is straightforward to verify that $\{\la_k^{-\f{1}{2}}\Phi_k\}_{k=1}^\iy$ is an orthonormal subset of $H_0^1(\Om;w)$. To prove that it is a basis, suppose by contradiction that there exists $0\neq u\in H_0^1(\Om;w)$ such that
	\begin{equation*}
		\int_\Om (\nabla \Phi_k\cdot \nabla u)w\df x=0 \mbox{ for all }k\in\N.
	\end{equation*}
	Given that $\{\Phi_k\}_{k\in\N}$ is an orthonormal basis of $L^2(\Om)$, for $u\in H_0^1(\Om;w)$, we can express
	\begin{equation}\label{06.04.4}
		u=\sum_{k=1}^\iy d_k\Phi_k \mbox{ where }  d_k=(u, \Phi_k)_{L^2(\Om)}, k\in\N.
	\end{equation}
	Then  we have
	\begin{equation*}
		0=(u,\Phi_k)_{H_0^1(\Om;w)}=\int_\Om (\nabla \Phi_k\cdot \nabla u)w\df x=\la_k\int_\Om \Phi_ku\df x =\la_kd_k,
	\end{equation*}
	which implies $d_k=0$. This leads to $u=0$, a contradiction. 
	
		From the above argument, since $u\in H_0^1(\Om;w)$, we have
	\begin{equation*}
		\nabla u=\sum_{i=1}^\iy e_i\la_i^{-\f{1}{2}}\nabla\Phi_k, \mbox{ and } \|u\|_{H_0^1(\Om;w)}^2=\sum_{i=1}^\iy e_i^2,
	\end{equation*}
	and 
	\begin{equation*}
		e_i=\int_\Om \left(\nabla u\cdot \la_i^{-\f{1}{2}}\nabla\Phi_i\right) w\df x=\la_i^{-\f{1}{2}}\int_\Om (\nabla u\cdot \nabla \Phi_i)w\df x=\la_i^\f{1}{2}\int_\Om u\Phi_i\df x=\la_i^\f{1}{2}u_i, 
	\end{equation*} 
		hence $\nabla u=\sum_{i=1}^\iy u_i\nabla\Phi_i$. Moreover, $\|u\|_{H_0^1(\Om;w)}=(\sum_{i=1}^\iy \la_iu_i^2)^\f{1}{2}$.
	
	Let $u\in H^2(\Om;w)$. Take $\vp_n=\sum_{i=1}^n u_i\Phi_i\in H_0^1(\Om;w)\cap H^2(\Om;w)$ for each $n\in\N$, then from
	\begin{equation*}
		\begin{split}
			(\mcA u, \mcA\vp_n)_{L^2(\Om)}
			&=\sum_{i=1}^n u_i\la_i \int_\Om (\mcA u)\Phi_i\df x=\sum_{i=1}^n u_i\la_i\int_\Om u\mcA\Phi_i\df x\\
			&=\sum_{i=1}^n u_i\la_i^2\int_\Om u\Phi_i\df x=\sum_{i=1}^n u_i^2\la_i^2
		\end{split}
	\end{equation*}
	and $\|\mcA\vp_n\|_{L^2(\Om)}^2=\sum_{i=1}^n u_i^2\la_i^2$ we obtain
	\begin{equation*}
		\sum_{i=1}^nu_i^2\la_i^2\leq \|\mcA u\|_{L^2(\Om)}^2
	\end{equation*}
	for all $n\in\N$ by Cauchy inequality. This implies that $\sum_{i=1}^\iy u_i^2\la_i^2\leq \|\mcA u\|_{L^2(\Om)}^2<\iy$.

	Let $\sum_{i=1}^\iy u_i^2\la_i^2<\iy$. For each  $\vp\in C_0^\iy(\Om)$,  we have
	\begin{equation*}
		\begin{split}
			(\mcA u, \vp)_{L^2(\Om)}
			&=\int_\Om (\nabla u\cdot \nabla\vp)w\df x=\sum_{i=1}^\iy u_i \int_\Om (\nabla\Phi_i\cdot \nabla\vp)w\df x=\sum_{i=1}^\iy u_i\la_i\int_\Om \Phi_i\vp\df x.
		\end{split}
	\end{equation*}
	Note that
	\begin{equation*}
		\int_\Om \left(\sum_{i=1}^n u_i\la_i\Phi_i\right)^2\df x=\sum_{i=1}^n u_i^2\la_i^2\leq \sum_{i=1}^\iy u_i^2\la_i^2<\iy \mbox{ for all } n\in\N,
	\end{equation*}
		that is, $\sum_{i=1}^\iy u_i\la_i\Phi_i\in L^2(\Om)$. Hence
	\begin{equation*}
		(\mcA u, \vp)_{L^2(\Om)}=\left(\sum_{i=1}^\iy u_i\la_i\Phi_i, \vp\right)_{L^2(\Om)}.
	\end{equation*}
	This implies that $\|\mcA u\|_{L^2(\Om)}^2=\sum_{i=1}^\iy u_i^2\la_i^2$ and $\mcA u=\sum_{i=1}^\iy u_i\la_i\Phi_i$.
\end{proof}

\subsection{Weak Solutions}

\begin{definition}\label{12.11.D1}
		We say that
	\begin{equation*}
		y\in L^2(0,T; H_0^1(\Om;w)), \mbox{ with } \pt_ty\in L^2(0,T; H^{-1}(\Om;w))
	\end{equation*}
	is a weak solution of \eqref{12.14.1} with respect to $(y^0,f)$ if
	
		(i) for every $v\in H_0^1(\Om;w)$ and for a.e.~$t\in (0,T)$, we have
	\begin{equation*}
		\lg\pt_{t}y, v\rg_{H^{-1}(\Om;w),H_0^1(\Om;w)}+\int_\Om w\nabla y\cdot \nabla v\df x=\int_\Om fv\df x, 
	\end{equation*}
	
	(ii) $y(0)=y^0$. 
\end{definition}

\begin{theorem}\label{01.11.T1} 
		Let $\al\in (0,1)$. Then equation \eqref{12.14.1} with respect to $(y^0,f)$ admits a unique weak solution
		\begin{equation*}
			y\in L^2(0,T; H_0^1(\Om;w)), \mbox{ with } \pt_ty\in L^2(0,T; H^{-1}(\Om;w))
		\end{equation*} 
		and this solution satisfies the estimate
		\begin{equation}\label{01.11.7}
			\begin{split} 
			&\esssup_{t\in [0,T]}\|y(t)\|_{L^2(\Om)}+\|y\|_{L^2(0,T;H_0^1(\Om;w))}+\|\pt_ty\|_{L^2(0,T; H^{-1}(\Om;w))}\\
			&\leq C\left(\|f\|_{L^2(Q)}+\|y^0\|_{L^2(\Om)}\right), 
			\end{split} 
		\end{equation}
		where the constant $C>0$ depends only on $\al, N, T$, and $M$.
		
		Moreover, if we further assume that $\Om$ satisfies Assumption \ref{Assumption (H)} and that $y^0\in H_0^1(\Om;w)$, then the weak solution belongs to
		\begin{equation*}
			y\in L^2(0,T; D(\mcA))\cap C([0,T]; H_0^1(\Om;w)), \mbox{ with } \pt_ty\in L^2(Q), 
		\end{equation*}
		and satisfies the estimate
		\begin{equation}\label{01.11.8}
			\begin{split} 
			&\esssup_{t\in [0,T]}\|y(t)\|_{H_0^1(\Om;w)}+\|y\|_{L^2(0,T; H^2(\Om-B(0,R_0)))}+\|y\|_{L^2(0,T; D(\mcA))}+\|\pt_ty\|_{L^2(Q)}\\
			&\leq C\left(\|f\|_{L^2(Q)}+\|y^0\|_{H_0^1(\Om;w)}\right), 
			\end{split} 
		\end{equation}
		where the constant $C>0$ depends only on $\al, R_0, N, T, M$, and $\Om-B(0,\f{1}{2}R_0)$.
\end{theorem}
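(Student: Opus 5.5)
For the first part I will use the Faedo--Galerkin method built on the eigenbasis $\{\Phi_n\}_{n\in\N}$ of Notation~\ref{12.11.N1}. With this basis the whole argument reduces to ODEs for the Fourier coefficients. Write $y^0=\sum_i y^0_i\Phi_i$ and $f(t)=\sum_i f_i(t)\Phi_i$. The candidate solution is
\begin{equation*}
y(t)=\sum_i y_i(t)\Phi_i,\qquad y_i(t)=e^{-\la_i t}y^0_i+\int_0^t e^{-\la_i(t-s)}f_i(s)\df s .
\end{equation*}

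\textbf{Energy estimate \eqref{01.11.7}.} Multiply the $i$th ODE by $y_i$, sum, and use Lemma~\ref{06.28.L1}:
\begin{equation*}
\f12\f{d}{dt}\|y\|_{L^2(\Om)}^2+\|y\|_{H_0^1(\Om;w)}^2=(f,y)_{L^2(\Om)}.
\end{equation*}
The right-hand side is bounded by $\f12\|f\|^2+\f12\|y\|^2$, and Gronwall gives the $L^\iy(0,T;L^2)$ and $L^2(0,T;H_0^1(\Om;w))$ bounds. Alternatively one can absorb using the Poincaré inequality \eqref{08.16.8}, which makes the constant depend on $M$. For the time derivative, $\pt_t y=f-\mcA y$ in $H^{-1}(\Om;w)$. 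Here $\|\mcA y\|_{H^{-1}(\Om;w)}=\|y\|_{H_0^1(\Om;w)}$ by duality, and $L^2\hra H^{-1}(\Om;w)$ continuously by \eqref{08.16.8}.

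\textbf{Existence, uniqueness, initial datum.} The series converges in the stated spaces, since the estimates hold uniformly for partial sums. Testing against $v=\Phi_k$ and using density (Lemma~\ref{06.28.L1}) gives (i) of Definition~\ref{12.11.D1}. Since $H_0^1(\Om;w)\s L^2\s H^{-1}(\Om;w)$ is a Gelfand triple, the Lions--Magenes lemma gives $y\in C([0,T];L^2)$, so (ii) is meaningful. Uniqueness follows by applying the energy identity to the difference of two solutions with zero data.

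\textbf{Improved regularity.} Now take $y^0\in H_0^1(\Om;w)$. Multiply the $i$th ODE by $\la_i y_i$, i.e.\ test the equation with $\mcA y$. By Lemma~\ref{06.28.L1},
\begin{equation*}
\f12\f{d}{dt}\|y\|_{H_0^1(\Om;w)}^2+\|\mcA y\|_{L^2(\Om)}^2=(f,\mcA y).
\end{equation*}
This yields the sup bound in $H_0^1(\Om;w)$ and the bound in $L^2(0,T;D(\mcA))$. Then $\pt_t y=f-\mcA y\in L^2(Q)$, and continuity into $H_0^1(\Om;w)$ follows from the standard lemma for $u\in L^2(D(\mcA))$ with $u'\in L^2(L^2)$, via the spectral interpolation.

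\textbf{Main obstacle: the $H^2(\Om-B(0,R_0))$ bound.} This is the step where Assumption~\ref{Assumption (H)} enters. On $\Om\setminus B(0,R_0/2)$ the weight satisfies $w\ge (R_0/2)^\al$ and is smooth. I will take a cutoff $\eta\in C^\iy$ with $\eta=0$ on $B(0,R_0/2)$ and $\eta=1$ outside $B(0,R_0)$, and set $z=\eta y$. Then $z$ vanishes on $\pt\Om$ and solves the uniformly elliptic equation
\begin{equation*}
-\Div(w\nabla z)=\eta\mcA y-2w\nabla\eta\cdot\nabla y-y\,\Div(w\nabla\eta)
\end{equation*}
on a $C^3$ piece of $\Om\setminus B(0,R_0/2)$. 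The right-hand side is controlled in $L^2$ by $\|\mcA y\|$ and $\|y\|_{H_0^1(\Om;w)}$, because $w$ is bounded below where $\nabla\eta\neq0$. Classical $H^2$ elliptic regularity up to the boundary then gives, for a.e.\ $t$,
\begin{equation*}
\|y(t)\|_{H^2(\Om-B(0,R_0))}\le C\|y(t)\|_{D(\mcA)},
\end{equation*}
with $C$ depending on $\Om-B(0,\f12R_0)$. Integrating in time finishes the bound.

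The delicate point is regularity near the interior part of the cutoff region. I will handle it by choosing the auxiliary $C^3$ domain between the balls of radii $R_0/2$ and $R_0$, or by interior regularity there. The sign condition $x\cdot\nu\le 0$ is not needed for this local estimate itself; I expect it to be needed only to justify the $D(\mcA)$ characterization.
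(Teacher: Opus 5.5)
Your proposal is correct and follows essentially the paper's route: a spectral Galerkin scheme on $\{\Phi_n\}$, the two energy identities, uniqueness via the energy identity, and a cutoff combined with classical $H^2$ regularity away from the origin. Testing with $\mcA y$ directly (and then reading off $\pt_t y=f-\mcA y$) instead of testing with $\pt_t y$ first, and taking strong limits of the explicit series, are only cosmetic differences.

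Two small corrections. First, the sign condition $x\cdot\nu\le 0$ is used nowhere, not even for the $D(\mcA)$ characterization: Lemma \ref{06.28.L1} is purely spectral, and Assumption \ref{Assumption (H)} only supplies the radius $R_0$. Second, your cutoff should vanish on a ball strictly larger than the one you excise, so that the artificial boundary stays outside $\supp z$; for example, take $\eta=0$ on $B(0,\f{3}{4}R_0)$ and cut the domain at radius $\f{1}{2}R_0$, as the paper does.
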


\begin{proof}
		We prove this theorem in several steps.
	
	{\it Step 1}. Galerkin method.
	
		Let $m\in\N$. Define
	\begin{equation*}
		y^m(x,t)=\sum_{n=1}^m y_n^m(t)\Phi_n(x), \quad f^m(x,t)=\sum_{n=1}^m f_n^m(t)\Phi_n(x), 
	\end{equation*}
		where $y_n^m(t)$, for $t\in [0,T]$ and $n=1,\cdots, m$, solves
	\begin{equation}\label{01.11.1}
		\f{\df}{\df t}y_n^m(t)+\la_n y_n^m(t)=f_n^m(t), \mbox{ with } f_n^m(t)=(f,\Phi_n)_{L^2(\Om)}, n\in\N, 
	\end{equation}
	with initial data
	\begin{equation}\label{01.11.2}
		y_n^m(0)=(y^0,\Phi_n)_{L^2(\Om)}\equiv y_n^0, \ n=1,\cdots, m. 
	\end{equation}
		It is clear that
	\begin{equation*}
		y_n^m(t)=y_n^0e^{-\la_nt}+\int_0^te^{\la_n(s-t)}f_n(s)\df s, \ n=1,\cdots, m. 
	\end{equation*}
		Note that \eqref{01.11.1} is equivalent to
	\begin{equation}\label{01.11.3}
		(\pt_ty^m,\Phi_n)_{L^2(\Om)}+(y^m,\Phi_n)_{H_0^1(\Om;w)}=(f^m,\Phi_n)_{L^2(\Om)}, \  n=1,\cdots, m, \  t\in [0,T]. 
	\end{equation}
	
	{\it Step 2}. Energy estimate.
	
		Multiplying \eqref{01.11.3} by $y_n^m(t)$ and summing over $n=1,\cdots, m$, we obtain
	\begin{equation*}
		(\pt_ty^m, y^m)_{L^2(\Om)}+(y^m,y^m)_{H_0^1(\Om;w)}=(f^m,y^m)_{L^2(\Om)} \mbox{ for a.e.}\ t\in [0,T]. 
	\end{equation*}
		Note that $(\pt_ty^m,y^m)_{L^2(\Om)}=\f{1}{2}\f{\df}{\df t}\|y^m\|_{L^2(\Om)}^2$. Using the second inequality in \eqref{08.16.8}, we get
	\begin{equation*}
		\f{\df}{\df t}\|y^m\|_{L^2(\Om)}^2+\|y^m\|_{H_0^1(\Om;w)}^2\leq C\|f\|_{L^2(\Om)}^2,
	\end{equation*}
		where the constant $C>0$ depends only on $\al, N$, and $M$. Integrating over $[0,t]$ for any $t\in (0,T]$, we get
	\begin{equation}\label{01.11.4}
		\begin{split}
			\esssup_{t\in [0,T]}\|y^m(t)\|_{L^2(\Om)}+\|y^m\|_{L^2(0,T; H_0^1(\Om;w))}\leq \|y^0\|_{L^2(\Om)}+C\|f\|_{L^2(Q)}, 
		\end{split}
	\end{equation}
	where the constant $C>0$ depends only on $\al, N$ and $M$. 
	
	For each $v\in H_0^1(\Om;w)$ with $\|v\|_{H_0^1(\Om;w)}\leq 1$, write $v=v^1+v^2$, where $v^1\in \Span \{\Phi_n\}_{n=1}^m$ and $(v^2, \Phi_n)=0$ for $k=1,\cdots, m$. Then $\|v^1\|_{H_0^1(\Om;w)}\leq \|v\|_{H_0^1(\Om;w)}\leq 1$, and from \eqref{01.11.3} we obtain 
	\begin{equation*}
		\lg \pt_ty^m, v\rg_{H^{-1}(\Om;w),H_0^1(\Om;w)}=(\pt_ty^m,v)_{L^2(\Om)}=(\pt_ty^m, v^1)=(f^m,v^1)-(y^m, v^1)_{H_0^1(\Om;w)}. 
	\end{equation*}
		This implies that
	\begin{equation*}
		\|\pt_ty^m\|_{H^{-1}(\Om;w)}\leq C\|f\|_{L^2(\Om)}+\|y^m\|_{H_0^1(\Om;w)}
	\end{equation*}
		by \eqref{08.16.8}. Hence
	\begin{equation}\label{01.11.5}
		\|\pt_ty^m\|_{L^2(0,T; H^{-1}(\Om;w))}\leq \|y^0\|_{L^2(\Om)}+ C\|f\|_{L^2(Q)} 
	\end{equation}
	by \eqref{01.11.4}, 
	where the constant $C>0$ depends only on $\al, N$ and $M$. 
	
	{\it Step 3}. Approximation. 
	
		From \eqref{01.11.4} and \eqref{01.11.5}, there exists a function
	\begin{equation*}
		z\in L^2(0,T; H_0^1(\Om;w)), \mbox{ with } \pt_tz\in L^2(0,T; H^{-1}(\Om;w))
		\end{equation*}
		such that
	\begin{equation}\label{01.11.6}
		\begin{split}
			y^m
			&\ra z\mbox{ weak star in } L^\iy(0,T; L^2(\Om)),\\
			y^m
			&\ra z\mbox{ weakly in }L^2(0,T; H_0^1(\Om;w)), \\
			\pt_ty^m
			&\ra \pt_t z\mbox{ weakly in }L^2(0,T;H^{-1}(\Om;w)). 
		\end{split}
	\end{equation}
	Moreover, we have \eqref{01.11.7}, and 
	\begin{equation*}
		z\in C([0,T]; L^2(\Om)). 
	\end{equation*}
	
	{\it Step 4}. We show that $z$ is a solution of \eqref{12.14.1}. 
	
		Fix $k\in\N$. Choose a function
		\begin{equation*}
			\psi(t)=\sum_{n=1}^k \psi_n(t)\Phi_n,
		\end{equation*}
		where $\psi_n\in C^\iy([0,T])$ for $n=1,\cdots,k$. Then, for all $m\geq k$, multiplying \eqref{01.11.3} by $\psi_n(t)$, summing over $n=1,\cdots, k$, and integrating over $[0,T]$, we obtain
	\begin{equation}\label{01.11.11}
		\begin{split}
			\int_0^T \lg \pt_t y^m, \psi\rg_{H^{-1}(\Om;w),H_0^1(\Om;w)}\df t+\iint_Q w\nabla y^m \cdot\nabla \psi\df x\df t=\iint_Q f^m\psi\df x\df t. 
		\end{split}
	\end{equation}
			Passing to a subsequence if necessary and letting $m\ra\iy$, we obtain
	\begin{equation}\label{01.11.9}
		\int_0^T\lg \pt_t z,\psi\rg_{H^{-1}(\Om;w),H_0^1(\Om;w)}\df t+\iint_Q w\nabla z\cdot\nabla\psi\df x\df t=\iint_Q f\psi\df x\df t
	\end{equation}
			by \eqref{01.11.6}. Since this class of test functions is dense in $L^2(0,T; H_0^1(\Om;w))$, it follows that
		\begin{equation*}
			\lg \pt_tz, v\rg_{H^{-1}(\Om;w),H_0^1(\Om;w)}+(z, v)_{H_0^1(\Om;w)}=(f,v)_{L^2(\Om)} 
		\end{equation*}
		for all $v\in H_0^1(\Om;w)$ and for a.e.~$t\in [0,T]$.
		
		We next show that $z(0)=y^0$.
		
			From \eqref{01.11.9} and the fact that $z\in C([0,T]; L^2(\Om))$, we obtain
	\begin{equation}\label{01.11.10}
		-\iint_Q z\pt_t\psi\df x\df t+\iint_{Q}w\nabla z\cdot \nabla\psi\df x\df t=\iint_Q f\psi\df x\df t+(z(0),\psi(0))_{L^2(\Om)}
	\end{equation}
		for each $\psi\in C^1([0,T]; H_0^1(\Om;w))$ with $\psi(T)=0$. On the other hand, \eqref{01.11.11} yields
	\begin{equation*}
		\begin{split}
			-\iint_Q y^m\pt_t\psi\df x\df t+\iint_Qw\nabla y^m\cdot \nabla\psi\df x\df t=\iint_Qf^m\psi\df x\df t+(y^m(0),\psi(0)).
		\end{split}
	\end{equation*}
			Passing again to a subsequence if necessary and letting $m\ra \iy$, it follows from \eqref{01.11.6} and \eqref{01.11.2} that
	\begin{equation*}
		-\iint_Q z\pt_t\psi\df x\df t+\iint_Q w\nabla z\cdot\nabla \psi\df x\df t=\iint_Q f\psi\df x\df t+(y^0,\psi(0))_{L^2(\Om)}. 
	\end{equation*}
			Comparing this identity with \eqref{01.11.10}, we conclude that $z(0)=y^0$.
	
	{\it Step 5}. Uniqueness. 
	
			Let $\xi=y-z$. Then $\xi$ solves \eqref{12.14.1} with respect to $(0,0)$. Taking $v=\xi$ in Definition \ref{12.11.D1} (i), we obtain
	\begin{equation*}
		\f{1}{2}\f{\df}{\df t}\|\xi\|_{L^2(\Om)}^2+\|\xi\|_{H_0^1(\Om;w)}^2=0. 
	\end{equation*} 
			Integrating over $(0,t)$ for each $t\in [0,T]$, we conclude that $\xi=0$, and hence the weak solution is unique.
	
	{\it Step 6}. Improved regularity. 
	
	Multiplying \eqref{01.11.3} by $\f{\df}{\df t}y_n^m$, summing $n=1,\cdots, m$, we get
	\begin{equation*}
		(\pt_ty^m, \pt_ty^m)_{L^2(\Om)}+(y^m, \pt_ty^m)_{H_0^1(\Om;w)}=(f^m,\pt_ty^m)_{L^2(\Om)} \mbox{ for a.e.}\ t\in [0,T], 
	\end{equation*}
		Note that $(y^m, \pt_ty^m)_{H_0^1(\Om;w)}=\f{1}{2}\f{\df }{\df t}\|y^m\|_{H_0^1(\Om;w)}^2$. Therefore, we obtain
	\begin{equation*}
		\begin{split}
			\|\pt_ty^m\|_{L^2(\Om)}^2+\f{\df}{\df t}\|y^m\|_{H_0^1(\Om;w)}^2\leq \|f\|_{L^2(\Om)}^2 \mbox{ for a.e.}\ t\in [0,T]. 
		\end{split}
	\end{equation*}
	Integrating on $(0,t)$ for each $t\in [0,T]$, we get
	\begin{equation}\label{03.08.1}
		\begin{split} 
		&\esssup_{t\in [0,T]}\|y^m(t)\|_{H_0^1(\Om;w)}^2+\|\pt_ty^m\|_{L^2(Q)}^2\\
		&\leq 2\|f\|_{L^2(Q)}^2+2\|y^m(0)\|_{H_0^1(\Om;w)}^2\\
		&=2\|f\|_{L^2(Q)}^2+2\sum_{n=1}^m (y_n^0)^2\la_n=2\|f\|_{L^2(Q)}^2+2\|y^0\|_{H_0^1(\Om;w)}^2
		\end{split} 
	\end{equation}
		by Lemma \ref{06.28.L1}. This implies that
	\begin{equation}\label{01.11.13}
		\begin{split}
			\esssup_{t\in [0,T]}\|y(t)\|_{H_0^1(\Om;w)}^2+\|\pt_ty\|_{L^2(Q)}^2\leq 2\|f\|_{L^2(Q)}^2+2\|y^0\|_{H_0^1(\Om;w)}^2
		\end{split}
	\end{equation}
	
		Multiplying \eqref{01.11.3} by $\la_n$ and summing over $n=1,\cdots, m$, we get
	\begin{equation*}
		(\pt_ty^m, \mcA y^m)_{L^2(\Om)}+(y^m, \mcA y^m)_{H_0^1(\Om;w)}=(f^m, \mcA y^m)_{L^2(\Om)}. 
	\end{equation*}
		that is,
	\begin{equation*}
		\|\mcA y^m\|_{L^2(\Om)}^2=(f^m,\mcA y^m)_{L^2(\Om)}-(\pt_ty^m, \mcA y^m)_{L^2(\Om)}
	\end{equation*}
		because $\mcA y^m\in H_0^1(\Om;w)$. This implies that
		\begin{equation*}
			\|\mcA y^m\|_{L^2(\Om)}\leq \|f^m\|_{L^2(\Om)}+\|\pt_ty^m\|_{L^2(\Om)}, 
		\end{equation*}
			Together with \eqref{03.08.1}, this yields
		\begin{equation*}
			\|\mcA y^m\|_{L^2(Q)}\leq C\left(\|f\|_{L^2(Q)}+\|y^0\|_{H_0^1(\Om;w)}\right). 
		\end{equation*}
			Combining this with \eqref{01.11.6} and Step 5, we get
		\begin{equation}\label{03.08.3}
			\|\mcA y\|_{L^2(Q)} \leq C\left(\|f\|_{L^2(Q)}+\|y^0\|_{H_0^1(\Om;w)}\right). 
		\end{equation}
			Together with \eqref{01.11.13}, this gives \eqref{01.11.8}, where the constant $C>0$ depends only on $\al, N, T$, and $M$.
	
	Choosing $\zeta\in C^\iy(\ol\R^N), 0\leq \zeta\leq 1$ such that 
	\begin{equation*}
		\zeta=1 \mbox{ on }\R^N-B(0,R_0), \quad \zeta=0 \mbox{ on } B\left(0,\f{3}{4}R_0\right),   \mbox{ and } |D\zeta|\leq CR_0^{-1}, |D^2\zeta|\leq CR_0^{-2}, 
	\end{equation*}
	where the positive constants $C$ are absolute. Note that $\xi=\zeta y^m$ is a solution of the following uniformly elliptic equation
	\begin{equation*}
		\begin{cases}
			\mcA\xi=\zeta f^m-\zeta \pt_ty^m-2w\nabla \zeta \cdot \nabla y^m-y^m\Div(w\nabla \zeta), &\mbox{in }\Om-B(0,\f{1}{2}R_0),\\
			\xi=0, &\mbox{on }\pt (\Om-B(0,\f{1}{2}R_0)), 
		\end{cases}
	\end{equation*}
	From \cite[Theorem 1 in Chapter 6.3.1 (p.~327) and Theorem 4 in Chapter 6.3.2 (p.~334)]{Evans}, we get
	\begin{equation*}
		\begin{split} 
		\|y^m\|_{H^2(\Om-B(0,R_0))}
		&=\|\xi\|_{H^2(\Om-B(0,R_0))}\\
		&\leq C\left(\|f^m\|_{L^2(\Om)}+\|\pt_ty^m\|_{L^2(\Om)}+\|y^m\|_{H_0^1(\Om;w)}\right), 
		\end{split} 
	\end{equation*}
		and then, combining \eqref{01.11.4} and \eqref{03.08.1}, we get
		\begin{equation}\label{03.08.2}
			\begin{split}
				\|y^m\|_{L^2(0,T; H^2(\Om-B(0,R_0)))}\leq C\left(\|f\|_{L^2(Q)}+\|y^0\|_{H_0^1(\Om;w)}\right), 
			\end{split}
		\end{equation}
			where the positive constant $C$ depends only on $\al, N, R_0, T$, and $\Om-B(0,\f{1}{2}R_0)$.
		Combining \eqref{03.08.2}, \eqref{03.08.3}, and \eqref{01.11.13}, we obtain \eqref{01.11.8}.  
\end{proof}

\begin{proposition}\label{03.08.P1}
	Let $y^0\in L^2(\Om)$ and $f\in L^2(Q)$. Then 
	\begin{equation*}
		y\in L^2(0,T; H_0^1(\Om;w))\cap H^1(0,T; H^{-1}(\Om;w)) 
	\end{equation*}
	is a weak solution of \eqref{12.14.1} with respect to $(y^0, f)$ if and only if 
	\begin{equation*}
		y\in L^2(0,T; H_0^1(\Om;w))
	\end{equation*}
	and for each $\psi\in C^\iy(\ol Q)$ with $\supp \psi(t) \s \Om$ for all $t\in [0,T]$ and $\psi(T)=0$ we get 
	\begin{equation}\label{03.08.4}
		-\iint_Q y\pt_t\psi\df x\df t+\iint_Q w\nabla y\cdot \nabla\psi\df x\df t=\iint_Q f\psi\df x\df t+\int_\Om y^0\psi(0)\df x. 
	\end{equation}
\end{proposition}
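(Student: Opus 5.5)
The forward implication comes from integrating Definition \ref{12.11.D1}(i) in time against $\psi$ and integrating by parts in $t$. The converse is the substantive part. There I will use \eqref{03.08.4} with $\psi$ compactly supported in time to show that $\pt_t y\in L^2(0,T;H^{-1}(\Om;w))$ and that (i) holds. I will then use test functions with $\psi(0)\neq 0$ to identify the initial value.

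\emph{($\Rightarrow$).} Let $y$ be a weak solution and take $\psi$ as in the statement. Since $\supp\psi(t)\s\Om$, we have $\psi\in C^1([0,T];H_0^1(\Om;w))$. Set $v=\psi(t)$ in Definition \ref{12.11.D1}(i) and integrate over $(0,T)$. For $y\in L^2(0,T;H_0^1(\Om;w))$ with $\pt_ty\in L^2(0,T;H^{-1}(\Om;w))$, the Lions--Magenes lemma gives $y\in C([0,T];L^2(\Om))$, with the integration by parts formula
\begin{equation*}
\int_0^T\lg\pt_ty,\psi\rg\df t=-\iint_Q y\pt_t\psi\df x\df t+(y(T),\psi(T))-(y(0),\psi(0)).
\end{equation*}
This is proved by mollifying $y$ in time, exactly as in the Gelfand triple $H_0^1(\Om;w)\hra L^2(\Om)\hra H^{-1}(\Om;w)$; the embedding is dense because $C_0^\iy(\Om)$ is dense in both spaces. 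Using $\psi(T)=0$ and $y(0)=y^0$, we obtain \eqref{03.08.4}.

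\emph{($\Leftarrow$).} First take $\psi(x,t)=\eta(t)\vp(x)$ with $\eta\in C_0^\iy(0,T)$ and $\vp\in C_0^\iy(\Om)$. Then \eqref{03.08.4} says that, in the distributional sense in $t$,
\begin{equation*}
\f{\df}{\df t}(y,\vp)=-\int_\Om w\nabla y\cdot\nabla\vp\df x+\int_\Om f\vp\df x.
\end{equation*}
Define $g(t)\in H^{-1}(\Om;w)$ by $\lg g(t),v\rg=-\int_\Om w\nabla y(t)\cdot\nabla v\df x+\int_\Om f(t)v\df x$. By Cauchy--Schwarz and the Poincar\'e inequality \eqref{08.16.8},
\begin{equation*}
\|g(t)\|_{H^{-1}(\Om;w)}\leq \|y(t)\|_{H_0^1(\Om;w)}+C\|f(t)\|_{L^2(\Om)},
\end{equation*}
so $g\in L^2(0,T;H^{-1}(\Om;w))$. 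Since $C_0^\iy(\Om)$ is dense in $H_0^1(\Om;w)$ by definition, the identity shows that the distributional time derivative of $y$, viewed as an $H^{-1}(\Om;w)$-valued function, equals $g$. The main point here is to check carefully that testing only against products $\eta\vp$ identifies the vector-valued derivative; this follows from the standard characterization of vector-valued distributional derivatives by scalar test functions. Consequently $\pt_ty=g$, which is exactly (i) for a.e.\ $t$ and every $v\in H_0^1(\Om;w)$, and it also gives $y\in H^1(0,T;H^{-1}(\Om;w))$.

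Finally, $y\in C([0,T];L^2(\Om))$ by the same embedding as above. Apply \eqref{03.08.4} to a general $\psi$ with $\psi(T)=0$ and $\psi(0)\neq0$, and compare with the integration by parts formula established in the forward direction. This yields
\begin{equation*}
\int_\Om (y(0)-y^0)\psi(0)\df x=0
\end{equation*}
for all $\psi(0)\in C_0^\iy(\Om)$. By density of $C_0^\iy(\Om)$ in $L^2(\Om)$, we conclude $y(0)=y^0$, so (ii) holds. The only delicate step is the Lions--Magenes/Gelfand triple integration by parts for the weighted spaces. I expect it to go through verbatim, because it uses only the density of $H_0^1(\Om;w)$ in $L^2(\Om)$ and time mollification.
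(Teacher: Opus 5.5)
Your proof is correct. The necessity direction and the first half of sufficiency agree with the paper. In the first half you test with $\psi$ compactly supported in time to get $\pt_t y=f-\mcA y\in L^2(0,T;H^{-1}(\Om;w))$, as the paper does. You diverge in the two remaining steps.

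\emph{Step (i).} The paper localizes in time with a cutoff $\zeta$, applies the Lebesgue differentiation theorem for each fixed $v\in C_0^\iy(\Om)$, and then uses density. You read (i) off directly from the identity $\pt_t y=g$ in $L^2(0,T;H^{-1}(\Om;w))$. This is shorter, and it gives one exceptional null set of times that works for every $v$ at once. The paper's null set depends on $v$, so reaching the same conclusion implicitly needs a countable dense family.

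\emph{The initial value.} The paper expands $y=\sum_n y_n(t)\Phi_n$ in the eigenbasis of $\mcA$ and derives the scalar problems $y_n'+\la_n y_n=f_n$, $y_n(0)=y_n^0$. It then identifies $y$ with the Galerkin limit $z$ of Theorem \ref{01.11.T1}, whose initial value is already known to be $y^0$. You instead use $y\in C([0,T];L^2(\Om))$ and the integration-by-parts formula with a test function having $\psi(0)\ne 0$. Comparing with \eqref{03.08.4} gives $(y(0)-y^0,\psi(0))_{L^2(\Om)}=0$. This is the same device the paper itself uses in Step 4 of the proof of Theorem \ref{01.11.T1}.

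\emph{What each route buys.} Your route is purely functional-analytic. It needs only the Gelfand triple $H_0^1(\Om;w)\hra L^2(\Om)\hra H^{-1}(\Om;w)$, with continuity from \eqref{08.16.8} and density from $C_0^\iy(\Om)$. It does not use the discreteness of the spectrum or the existence theorem. The paper's route additionally identifies $y$ with the constructed solution, but the statement does not require that.
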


\begin{proof}
		We first prove the necessity.
	
		Let $\psi\in C^\iy(\ol Q)$ with $\supp \psi(t)\s \Om$ for all $t\in [0,T]$ and $\psi(T)=0$. Then, from Definition \ref{12.11.D1}, we get
	\begin{equation*}
		\lg \pt_ty, \psi(t)\rg_{H^{-1}(\Om;w),H_0^1(\Om;w)}+\int_\Om w\nabla y\cdot \nabla\psi(t)\df x=\int_\Om f\psi(t)\df x
	\end{equation*}
	for a.e.~$t\in [0,T]$. Integrating on $[0,T]$, we obtain 
	\begin{equation*}
		\int_0^T\lg \pt_ty,\psi\rg_{H^{-1}(\Om;w),H_0^1(\Om;w)}\df t+\iint_Q w\nabla y\cdot \nabla\psi\df x\df t=\iint_Q f\psi\df x\df t. 
	\end{equation*}
		Since $y$ is a weak solution of \eqref{12.14.1} with respect to $(y^0,f)$, we have $y\in C([0,T]; L^2(\Om))$. Integrating by parts, we obtain
	\begin{equation*}
		-\iint_Q y\pt_t\psi\df x\df t+\iint_Q w\nabla y\cdot \nabla\psi\df x\df t=\iint_Q f\psi\df x\df t+\int_\Om y(0)\psi(0)\df x. 
	\end{equation*}
		Together with Definition \ref{12.11.D1} (ii), this gives \eqref{03.08.4}.
	
		We next prove the sufficiency.
	
	From 
	\begin{equation*}
		\begin{split} 
		\lg\mcA y, \psi\rg_{L^2(0,T; H^{-1}(\Om;w)),L^2(0,T; H_0^1(\Om;w))}
		&=\int_0^T\lg \mcA y, \psi\rg_{H^{-1}(\Om;w),H_0^1(\Om;w)}\df t\\
		&=\iint_Q w\nabla y\cdot \nabla\psi\df x\df t
		\end{split} 
	\end{equation*}
		for all $\psi\in L^2(0,T; H_0^1(\Om;w))$, we have
	\begin{equation*}
		\|\mcA y\|_{L^2(0,T; H^{-1}(\Om;w))}\leq \|y\|_{L^2(0,T; H_0^1(\Om;w))}. 
	\end{equation*}
			Therefore,
	\begin{equation*}
		\mcA: L^2(0,T; H_0^1(\Om;w))\ra L^2(0,T; H^{-1}(\Om;w))
	\end{equation*}
			is a bounded linear operator. Since $f\in L^2(Q)\s L^2(0,T; H^{-1}(\Om;w))$, \eqref{03.08.4} gives
	\begin{equation*}
		-\iint_Q y\pt_t\psi\df x\df t+\iint_Q w\nabla y\cdot \nabla\psi\df x\df t=\iint_Q f\psi\df x\df t 
	\end{equation*}
		for all $\psi\in C_0^\iy(Q)$. This shows that
	\begin{equation*}
		\begin{split} 
		&\int_0^T \lg\pt_ty, \psi\rg_{H^{-1}(\Om;w),H_0^1(\Om;w)}\df t\\ 
		&=\int_0^T\lg f, \psi\rg_{H^{-1}(\Om;w),H_0^1(\Om;w)}\df t-\int_0^T\lg \mcA y, \psi\rg_{H^{-1}(\Om;w),H_0^1(\Om;w)}\df t,
		\end{split} 
	\end{equation*}
			and therefore $\pt_ty\in L^2(0,T; H^{-1}(\Om;w))$.
	
		We now prove that $y$ is a weak solution of \eqref{12.14.1} with respect to $(y^0,f)$.
	
		Let $t\in (0,T)$. Set $\de_0=\f{1}{4}\min\{t, T-t\}$. For each $\de\in (0,\de_0)$, choose $\zeta\in C_0^\iy(\R)$, $0\leq \zeta \leq 1$, such that $\zeta=1$ on $(t-\de,t+\de)$ and $\zeta=0$ on $(-\iy, t-\de-\e)\cup (t+\de+\e,+\iy)$ for each $\e\in (0,\de)$. Fix $v\in C_0^\iy(\Om)$. Then $\zeta v\in C_0^\iy(Q)$, and
	\begin{equation*}
		\begin{split}
			-\iint_Q yv\pt_t\zeta \df x\df t+\iint_Q \zeta  w\nabla y \cdot \nabla v\df x\df t=\iint_Q \zeta fv\df x\df t,
		\end{split}
	\end{equation*}
		that is,
	\begin{equation*}
		\int_0^T \zeta(t)\lg \pt_ty, v\rg_{H^{-1}(\Om;w),H_0^1(\Om;w)}\df t+\iint_Q \zeta w\nabla y \cdot \nabla v\df x\df t=\iint_Q \zeta fv\df x\df t. 
	\end{equation*}
	Letting $\e\ra 0^+$, we get
	\begin{equation*}
		\int_{t-\de}^{t+\de}\lg \pt_ty, v\rg_{H^{-1}(\Om;w),H_0^1(\Om;w)}\df t+\int_{t-\de}^{t+\de}\int_\Om w\nabla y\cdot \nabla v\df x\df t=\int_{t-\de}^{t+\de}\int_\Om fv\df x\df t. 
	\end{equation*}
		Letting $\de\ra 0^+$ and applying the Lebesgue Differentiation Theorem, we obtain
	\begin{equation*}
		\lg \pt_ty, v\rg_{H^{-1}(\Om;w),H_0^1(\Om;w)}+(y, v)_{H_0^1(\Om;w)}=(f,v)_{L^2(\Om)}. 
	\end{equation*}
			Since $C_0^\iy(\Om)$ is dense in $H_0^1(\Om;w)$, this proves Definition \ref{12.11.D1} (i).
	
	Let $\xi\in C^\iy(\R)$ with $\xi(T)=0$, and $v\in C_0^\iy(\Om)$. From \eqref{03.08.4}, we get
	\begin{equation*}
		-\iint_Q y v\pt_t\xi\df x\df t+\iint_Q \xi  w\nabla y\cdot \nabla v\df x\df t=\iint_Q \xi fv\df x\df t+\xi(0)\int_\Om y^0v\df x. 
	\end{equation*}
		Since $y\in L^2(0,T; H_0^1(\Om;w))$, write $y=\sum_{n=1}^\iy y_n(t)\Phi_n(x)$. Letting $v\ra \Phi_n$ for each $n\in\N$, we get
	\begin{equation*}
		-\int_0^T y_n(t) \pt_t\xi\df t+\la_n\int_0^T \xi y_n(t)\df t=\int_0^T f_n\xi\df t+\xi(0)y_n^0, 
	\end{equation*}
		where $y_n^0=(y^0,\Phi_n)_{L^2(\Om)}$ and $f_n=(f,\Phi_n)_{L^2(\Om)}$ for all $n\in\N$. This implies that $y_n\ (n\in\N)$ solves
	\begin{equation*}
		\begin{cases}
			\f{\df}{\df t}y_n+\la_ny_n=f_n,&\mbox{for } t\in [0,T],\\
			y_n(0)=y_n^0, 
		\end{cases}
	\end{equation*}
		From \eqref{01.11.1}, \eqref{01.11.2}, and the proof of Theorem \ref{01.11.T1}, we get $y^m=\sum_{n=1}^m y_n\Phi_n\ra z$ weakly in $L^2(0,T; H_0^1(\Om;w))$ as $m\ra\iy$; see \eqref{01.11.6}. Here $z$ is the weak solution of \eqref{12.14.1} with respect to $(y^0,f)$. On the other hand, $y^m\ra \sum_{n=1}^\iy y_n\Phi_n=y$ strongly in $L^2(0,T; H_0^1(\Om;w))$. Hence $y=z$. Moreover, by Definition \ref{12.11.D1} (ii), we have $y(0)=z(0)=y^0$. This completes the proof of the proposition.
\end{proof}

\begin{corollary}\label{01.12.C1}
		Let $y^0\in H_0^1(\Om;w)$ satisfy $\mcA y^0\in H_0^1(\Om;w)$. Let $y$ be the weak solution of \eqref{12.14.1} with respect to $(y^0,0)$. Then
	\begin{equation*}
		y\in L^2(0, T; D(\mcA^2)), \mbox{ with } \pt_t y\in L^2(0, T; D(\mcA)), 
	\end{equation*}
		and
	\begin{equation*}
		\begin{split}
			\esssup_{t\in [0,T]}\|\mcA y(t)\|_{H_0^1(\Om;w)}+\|\pt_ty\|_{L^2(0,T; H^2(\Om-B(0,R_0)))}+\|\pt_ty\|_{L^2(0, T; D(\mcA))}\leq C\|\mcA y^0\|_{H_0^1(\Om;w)},  
		\end{split}
	\end{equation*}
	where the constant $C>0$ depends only on $\al, T, N$ and $M$. 
\end{corollary}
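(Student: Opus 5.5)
The plan is to differentiate the equation in time and apply Theorem \ref{01.11.T1} to $z=\pt_t y$. Since $f=0$, we expect $z$ to be the weak solution of \eqref{12.14.1} with respect to $(-\mcA y^0,0)$. The hypothesis $\mcA y^0\in H_0^1(\Om;w)$ then places us exactly in the improved-regularity part of Theorem \ref{01.11.T1}, with initial datum $z^0=-\mcA y^0\in H_0^1(\Om;w)$.

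To make this rigorous I would work on the spectral side, as in the proof of Theorem \ref{01.11.T1}. Writing $y^0=\sum_n y_n^0\Phi_n$, the solution is $y=\sum_n y_n^0e^{-\la_nt}\Phi_n$. By Lemma \ref{06.28.L1},
\begin{equation*}
\|\mcA y^0\|_{H_0^1(\Om;w)}^2=\sum_n\la_n^3(y_n^0)^2<\iy .
\end{equation*}
Then $\pt_ty=-\sum_n\la_ny_n^0e^{-\la_nt}\Phi_n$, where termwise differentiation is justified because the series for $\pt_t y$ converges in $L^2(0,T;H_0^1(\Om;w))$. By Proposition \ref{03.08.P1} and the uniqueness in Theorem \ref{01.11.T1}, this series is the weak solution $z$ with datum $-\mcA y^0$. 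The same computation gives $\mcA y(t)=\sum_n\la_ny_n^0e^{-\la_nt}\Phi_n=-z(t)$. In particular $\mcA y\in L^2(0,T;D(\mcA))$, i.e.\ $y\in L^2(0,T;D(\mcA^2))$, where $D(\mcA^2)=\{u\in D(\mcA):\mcA u\in D(\mcA)\}$.

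For the estimate, I would apply \eqref{01.11.8} to $z$ with $f=0$. This gives
\begin{equation*}
\esssup_t\|z(t)\|_{H_0^1(\Om;w)}+\|z\|_{L^2(0,T;H^2(\Om-B(0,R_0)))}+\|z\|_{L^2(0,T;D(\mcA))}\le C\|\mcA y^0\|_{H_0^1(\Om;w)} .
\end{equation*}
Substituting $z=\pt_ty=-\mcA y$ yields exactly the three terms in the corollary. Alternatively, the sup-in-time term follows directly from the monotonicity $\sum_n\la_n^3(y_n^0)^2e^{-2\la_nt}\le\sum_n\la_n^3(y_n^0)^2$.

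The main obstacle is the identification step: we must check that $\pt_t y$, a priori only in $L^2(0,T;H^{-1}(\Om;w))$, coincides with the weak solution of the differentiated problem and satisfies $\pt_ty=-\mcA y$ pointwise in time. The eigenfunction expansion handles this cleanly, since all the series converge absolutely in the relevant weighted norms under $\sum\la_n^3(y_n^0)^2<\iy$. The local $H^2(\Om-B(0,R_0))$ bound requires Assumption \ref{Assumption (H)}, which enters through Theorem \ref{01.11.T1}. Because of this, the constant will depend on $R_0$ and on $\Om-B(0,\f12R_0)$ as well as on $\al,T,N,M$.
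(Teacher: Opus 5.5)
Your proposal is correct and follows essentially the same route as the paper. You expand in the eigenbasis, identify $\pt_t y=-\mcA y$ as the weak solution of \eqref{12.14.1} with datum $-\mcA y^0\in H_0^1(\Om;w)$, and read off all three terms from \eqref{01.11.8}. Your sign is the correct one; the paper writes $\mcA y^0$, which is harmless for the norms. Your remark that the constant must also depend on $R_0$ and $\Om-B(0,\f12R_0)$, because of the local $H^2$ term, is consistent with Theorem \ref{01.11.T1} and with how the corollary's $\de$-analogue is used in Theorem \ref{01.11.T3}.
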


\begin{proof}
		Recall that $y\in D(\mcA^2)$ if and only if $y\in D(\mcA)$ and $\mcA y\in D(\mcA)$.
		From Lemma \ref{06.28.L1}, we have $\mcA y^0=\sum_{n=1}^\iy (y^0, \Phi_n)_{L^2(\Om)}\la_n\Phi_n$. Note that
	\begin{equation*}
		\wh y=\sum_{n=1}^\iy (y^0,\Phi_n)_{L^2(\Om)}\la_ne^{-\la_nt}=\pt_t y, 
	\end{equation*}
		is the weak solution of \eqref{12.14.1} with respect to $(\mcA y^0,0)$. Therefore, \eqref{01.11.8} and the identity $\mcA \pt_ty=\pt_t\mcA y$ yield the corollary.
\end{proof}

\section{Shape Design}\label{S3}

In this section, we construct a sequence of uniformly parabolic approximate equations by means of the shape design method. The corresponding solutions approximate the weak solution of \eqref{12.14.1} in an appropriate sense. This approximation will be used in Section \ref{S4}.

Let $\de\in (0,\de_0)$, where $\de_0=\f{1}{16}\min\{1,\min\{R_0, M\}\}$. Let $\Om_\de\in C^3$ be as in Assumption \ref{Assumption (H)}. Then $\Om_\de-B(0,\f{1}{2}R_0)=\Om-B(0,\f{1}{2}R_0)$ for all $\de\in (0,\de_0)$.
Denote $Q_\de=\Om_\de\ts (0,T)$, $w_\de=w|_{\Om_\de}$, and
\begin{equation*}
	\mcA_\de y=-\Div(w_\de\nabla y).
\end{equation*}

We consider the equation
	\begin{equation}\label{01.11.14}
		\begin{cases}
			\pt_t y_\de+\mcA_\de y_\de=f_\de, &\mbox{in }Q_\de,\\
			y_\de=0, &\mbox{on }\pt Q_\de, \\
			y_\de(0)=y_\de^0, &\mbox{in }\Om_\de, 
		\end{cases}
	\end{equation}
where $y_\de^0\in L^2(\Om_\de)$ and $f_\de\in L^2(Q_\de)$. Equation \eqref{01.11.14} is uniformly parabolic.

As in Section \ref{S2}, we introduce the corresponding solution spaces and collect the needed results.

We define
\begin{equation*}
	H^{1}(\Omega_\de; w_\de) = \left\{u \in L^2(\Omega_\de) \colon \int_{\Om_\de}w_\de\nabla u\cdot \nabla u\df x<+\iy\right\}.
\end{equation*}
The inner product and norm on $H^1(\Omega_\de; w_\de)$ are defined by
\begin{equation*}
	(u, v)_{H^1(\Omega_\de; w_\de)} =\int_{\Om_\de} uv\df x+\int_{\Om_\de} (\nabla u\cdot \nabla v)w_\de\df x,\quad \|u\|_{H^1(\Om_\de;w_\de)}=(u,u)_{H^1(\Om_\de;w_\de)}^\f{1}{2}. 
\end{equation*} 
We also define
\begin{equation*}
	H_{0}^1(\Omega_\de; w_\de) = \text{the closure of } \mathcal{D}(\Omega_\de) \text{ in } H^1(\Omega_\de; w_\de)
\end{equation*} 
We denote by $H^{-1}(\Omega_\de; w_\de)$ the dual space of $H_{0}^1(\Omega_\de; w_\de)$ with pivot space $L^2(\Om_\de)$, which is a subspace of $\mathcal{D}'(\Omega_\de)$.  

Finally, we define
\begin{equation*}
	H^2(\Om_\de;w_\de)=\left\{u\in H^1(\Om_\de;w_\de)\colon \int_{\Om_\de} (\mcA_\de u)^2\df x<+\iy\right\}.
\end{equation*}
Its inner product and norm are defined by
\begin{equation*}
	(u,v)_{H^2(\Om_\de;w_\de)}=(u,v)_{H^1(\Om_\de;w_\de)}+\int_{\Om_\de} (\mcA_\de u)(\mcA_\de v)\df x, \quad \|u\|_{H^2(\Om_\de;w_\de)}=(u,u)_{H^2(\Om_\de;w_\de)}^\f{1}{2}. 
\end{equation*}

It is also well known that
\begin{equation*}
	H_0^1(\Om_\de;w_\de),\quad H^1(\Om_\de;w_\de), \mbox{ and } H^2(\Om_\de;w_\de)
\end{equation*}
are Hilbert spaces (see \cite{GC,Heinonen}), since $H_0^1(\Om_\de;w_\de)=H_0^1(\Om_\de)$, $H^1(\Om_\de;w_\de)=H^1(\Om_\de)$, and $H^2(\Om_\de;w_\de)=H^2(\Om_\de)$ by standard elliptic regularity. In general, however, $H^2(\Omega; w) \neq H^2(\Omega)$.

We denote 
\begin{equation*}
	D(\mcA_\de)=H^2(\Om_\de;w_\de)\cap H_0^1(\Om_\de;w_\de). 
\end{equation*}

\begin{lemma}\label{01.11.L1}
	Let $N \geq 2$ and $\alpha \in (0, 1)$. Then, for all $u \in H_0^1(\Omega_\de; w_\de)$, the following inequality holds:
	\begin{equation*}
		\int_{\Om_\de}|x|^{\al-2}u^2\df x\leq \f{4}{(N-2+\al)^2}\int_{\Om_\de} w_\de\nabla u\cdot \nabla u\df x.
	\end{equation*} 
\end{lemma}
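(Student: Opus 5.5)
The quickest route is to reduce to Lemma \ref{08.15.L1}: extend $u\in H_0^1(\Om_\de;w_\de)$ by zero to $\tilde u$ on $\Om$. Then $\tilde u\in H_0^1(\Om;w)$, $\nabla\tilde u=\nabla u$ on $\Om_\de$ and $\nabla\tilde u=0$ on $\Om-\Om_\de$. So both integrals are unchanged by the extension, and the inequality with the same constant $\f{4}{(N-2+\al)^2}$ follows directly.

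To justify the extension, I would argue by density. Take $\vp_k\in C_0^\iy(\Om_\de)$ with $\vp_k\ra u$ in $H^1(\Om_\de;w_\de)$. Their zero extensions lie in $C_0^\iy(\Om)$ and form a Cauchy sequence in $H^1(\Om;w)$, because the norms coincide. The limit is therefore $\tilde u\in H_0^1(\Om;w)$.

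Alternatively, one can repeat the proof of Lemma \ref{08.15.L1} directly on $\Om_\de$, which is simpler because $\Om_\de\cap B(0,\de)=\es$. There is no need to cut out a small ball, and $|x|^{\al-2}$ is smooth and bounded on $\ol{\Om_\de}$. The steps are as follows.

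\begin{enumerate}
\item Take $u\in C_0^\iy(\Om_\de)$ and write
\[
\int_{\Om_\de}|x|^{\al-2}x\cdot\nabla u^2\df x=-(N+\al-2)\int_{\Om_\de}|x|^{\al-2}u^2\df x .
\]
There is no boundary term, since $u$ has compact support. This uses $\Div(|x|^{\al-2}x)=(N+\al-2)|x|^{\al-2}$ away from the origin.
\item Apply Cauchy--Schwarz to the left-hand side, splitting $|x|^{\al-2}|x|=|x|^{\f{\al-2}{2}}\cdot|x|^{\f{\al}{2}}$. This gives
\[
(N+\al-2)\Big(\int_{\Om_\de}|x|^{\al-2}u^2\df x\Big)^{\f12}\le 2\Big(\int_{\Om_\de}w_\de|\nabla u|^2\df x\Big)^{\f12}.
\]
\item Extend from $C_0^\iy(\Om_\de)$ to all of $H_0^1(\Om_\de;w_\de)$ by density. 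Both sides are continuous in the $H^1(\Om_\de;w_\de)$ norm, since $|x|^{\al-2}\le\de^{\al-2}$ on $\Om_\de$.
\end{enumerate}

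The only point needing care is this final density and continuity step, and it is routine because the weight is bounded away from degeneracy on $\Om_\de$. The essential observation is that the constant is independent of $\de$, which both routes make evident.
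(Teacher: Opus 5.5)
Your proposal is correct, and your second route is essentially the paper's proof. The paper says only that one should rerun the argument of Lemma \ref{08.15.L1}: integrate $|x|^{\al-2}x\cdot\nabla u^2$ by parts, use $\Div(|x|^{\al-2}x)=(N+\al-2)|x|^{\al-2}$, and apply Cauchy--Schwarz. Since $\Om_\de\cap B(0,\de)=\es$, the excision of $B(0,\e)$ and the sign of $x\cdot\nu$ on the small sphere become vacuous. Your choice to work with $C_0^\iy(\Om_\de)$ and pass to the limit also removes any trace consideration on $\pt\Om_\de$. That limit passage is legitimate because $|x|^{\al-2}\le\de^{\al-2}$ on $\Om_\de$.

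Your first route is genuinely different and shorter. It does not redo the computation; it transfers Lemma \ref{08.15.L1} through the isometric zero extension $H_0^1(\Om_\de;w_\de)\hra H_0^1(\Om;w)$. This is the same property the paper later records for the operator $E$ in Definition \ref{01.11.D2}. With this route, the $\de$-independence of the constant is automatic rather than something to verify. The only step you state without justification is identifying the $H^1(\Om;w)$-limit with the zero extension of $u$, which follows at once from $L^2$ convergence. You can bypass it entirely by applying Lemma \ref{08.15.L1} to the zero extensions of $\vp_k\in C_0^\iy(\Om_\de)$ and letting $k\to\iy$ in integrals taken over $\Om_\de$ only.
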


\begin{proof}
	The proof follows exactly the same argument as in Lemma \ref{08.15.L1}.
\end{proof}

\begin{remark}\label{01.11.R1}
	From Lemma \ref{01.11.L1}, we have
	\begin{equation}\label{01.11.12}
		\int_{\Om_\de} u^2\df x \leq \f{4M^{2-\al}}{(N-2+\al)^2}\int_{\Om_\de} w_\de\nabla u\cdot \nabla u\df x,
	\end{equation}
	for all $u\in H_0^1(\Om_\de;w_\de)$. This is a Poincar\'{e} inequality.
	In particular, the norm
	\begin{equation}\label{01.11.15}
		\|u\|_{H_0^1(\Omega_\de; w_\de)} = \left(\int_{\Omega_\de} (\nabla u \cdot \nabla u) w_\de \, \mathrm{d}x\right)^{\frac{1}{2}}
	\end{equation}
	is an equivalent norm in $H_0^1(\Omega_\de; w_\de)$. Hereafter, we use \eqref{01.11.15} to define the norm of $H_0^1(\Omega_\de; w_\de)$.
\end{remark}

\begin{lemma}\label{01.11.L2}
	The embedding $H_0^1(\Omega_\de; w_\de) \hookrightarrow L^2(\Omega_\de)$ is compact.
\end{lemma}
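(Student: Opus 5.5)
The statement to prove is Lemma \ref{01.11.L2}: the embedding $H_0^1(\Om_\de;w_\de)\hookrightarrow L^2(\Om_\de)$ is compact. There are two natural routes, and both rest on the fact that the weight is bounded below on the truncated domain.

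\emph{Direct route.} Since $B(0,\de)\cap\Om_\de=\es$, every $x\in\Om_\de$ satisfies $|x|\geq\de$, so $w_\de=|x|^\al\geq\de^\al$ on $\Om_\de$. Let $\{u_n\}_{n\in\N}$ be bounded in $H_0^1(\Om_\de;w_\de)$, say $\|u_n\|_{H_0^1(\Om_\de;w_\de)}\leq C_0$. Then
\begin{equation*}
\de^\al\int_{\Om_\de}|\nabla u_n|^2\df x\leq \int_{\Om_\de}w_\de|\nabla u_n|^2\df x\leq C_0^2 .
\end{equation*}
By the Poincar\'e inequality \eqref{01.11.12} we also have $\|u_n\|_{L^2(\Om_\de)}^2\leq \f{4M^{2-\al}}{(N-2+\al)^2}C_0^2$. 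Hence $\{u_n\}$ is bounded in $H^1(\Om_\de)$.

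Each $u_n$ is an $H_0^1(\Om_\de;w_\de)$-limit of functions in $C_0^\iy(\Om_\de)$. By the two-sided bound $\de^\al\leq w_\de\leq M^\al$, this convergence also holds in $H^1(\Om_\de)$, so in fact $u_n\in H_0^1(\Om_\de)$. Now $\Om_\de$ is a bounded $C^3$ domain, so the Rellich--Kondrachov theorem applies. Alternatively, one can use the Rellich theorem for $H_0^1$, which needs no boundary regularity. Either way, a subsequence converges strongly in $L^2(\Om_\de)$, which is exactly compactness of the embedding.

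\emph{Alternative route.} One can instead repeat the proof of Lemma \ref{08.15.L4} verbatim, with Lemma \ref{08.15.L1} replaced by Lemma \ref{01.11.L1}. On $\Om_\de\cap B(0,\e)$ the Hardy inequality gives the uniform smallness $\int_{\Om_\de\cap B(0,\e)}u_n^2\df x\leq C^2C_0^2\e^{2-\al}$. Away from the origin, classical Sobolev compactness on $\Om_\de-\ol{B(0,\e_0)}$ applies. The direct route is shorter, so I would present that one and remark that the argument of Lemma \ref{08.15.L4} also works.

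\emph{Main obstacle.} There is essentially no real obstacle. The only point needing care is identifying $H_0^1(\Om_\de;w_\de)$ with $H_0^1(\Om_\de)$ with equivalent norms, which follows from the two-sided bounds on $w_\de$. The compactness constant may degenerate as $\de\to0$, but the lemma is stated for fixed $\de$, so this is harmless.
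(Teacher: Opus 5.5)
Your direct route is correct and matches the paper's proof, which simply notes that $H_0^1(\Om_\de;w_\de)=H_0^1(\Om_\de)$ (justified by $\de^\al\leq w_\de\leq M^\al$ on $\Om_\de$) and then invokes the classical Rellich theorem. You spell out the same argument in more detail, and the Hardy-based alternative you mention also works but is not needed here.
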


\begin{proof}
	Since $H_0^1(\Om_\de;w_\de)=H_0^1(\Om_\de)$, the conclusion follows. 
\end{proof}

From Remark \ref{01.11.R1} and Lemma \ref{01.11.L1}, we deduce that the degenerate partial differential operator $\mcA_\de$ has discrete point spectrum
\begin{equation}\label{03.09.1}
	0<\la_1^\de< \la_2^\de\leq \la_3^\de\leq  \cdots \ra +\iy, 
\end{equation}
that is, $\la_n^\de\ (n\in\N)$ satisfies
\begin{equation}\label{03.09.2}
	\begin{cases}
		\mcA_\de  \Phi_n^\de=\la_n ^\de\Phi_n^\de, & \mbox{in }\Om_\de, \\
		\Phi_n^\de=0, &\mbox{in }\pt\Om_\de. 
	\end{cases}
\end{equation}
Moreover, from \eqref{01.11.12},  we get
\begin{equation}\label{03.09.3}
	\la_1^\de=\inf_{0\neq u\in H_0^1(\Om_\de;w_\de)}\f{\int_{\Om_\de} w_\de\nabla u\cdot \nabla u\df x}{\int_{\Om_\de} u^2\df x}\geq \f{(N-2+\al)^2}{4M^{2-\al}}.
\end{equation}

\begin{notation} \label{03.09.N1}
	We denote by $\Phi_n^\de(x)$ the $n$th eigenfunction of $\mcA_\de$ corresponding to the eigenvalue $\la_n^\de\ (n\in\N)$. Then $\{\Phi_n^\de\}_{n\in\N}$ is an orthonormal basis of $L^2(\Om_\de)$ and an orthogonal subset of $H_0^1(\Om_\de;w_\de)$. See \cite[Theorem 7 (p.~728) in Appendix D]{Evans} or the proof of Lemma \ref{03.09.L1}.
\end{notation}

\begin{lemma}\label{03.09.L1}
	Let $u=\sum_{i=1}^\iy u_i \Phi_i^\de\in H_0^1(\Om_\de;w_\de)$ with $u_i=(u,\Phi_i^\de)_{L^2(\Om_\de)}$ for all $i\in\N$. We have $\nabla u=\sum_{i=1}^\iy u_i\nabla \Phi_i^\de$ and $\|u\|_{H_0^1(\Om_\de;w_\de)}=(\sum_{i=1}^\iy u_i^2\la_i^\de)^\f{1}{2}$, and
	\begin{equation*}
		u\in H^2(\Om_\de;w_\de)\Lra \sum_{i=1}^\iy u_i^2(\la_i^\de)^2<\iy,
	\end{equation*}
	and
	\begin{equation*}
		\mcA_\de  u=\sum_{i=1}^\iy u_i\la_i^\de\Phi_i ^\de\mbox{ and } \|\mcA_\de u\|_{L^2(\Om_\de)}=\left(\sum_{i=1}^\iy u_i^2(\la_i^\de)^2\right)^\f{1}{2}.
	\end{equation*}
\end{lemma}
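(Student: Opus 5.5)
The plan is to copy the proof of Lemma \ref{06.28.L1} line by line, replacing $\Om,w,\mcA,\la_i,\Phi_i$ by $\Om_\de,w_\de,\mcA_\de,\la_i^\de,\Phi_i^\de$. The needed ingredients are already available for the truncated problem. Remark \ref{01.11.R1} gives the equivalent norm \eqref{01.11.15}, and Lemma \ref{01.11.L2} gives the compact embedding. Together these yield the spectrum \eqref{03.09.1} and an $L^2(\Om_\de)$-orthonormal basis $\{\Phi_n^\de\}$, with the weak eigen-relation
\[
\int_{\Om_\de}(\nabla\Phi_k^\de\cdot\nabla v)\,w_\de\df x=\la_k^\de\int_{\Om_\de}\Phi_k^\de v\df x\quad\text{for all } v\in H_0^1(\Om_\de;w_\de).
\]
Taking $v=\Phi_l^\de$ gives $(\Phi_k^\de,\Phi_l^\de)_{H_0^1(\Om_\de;w_\de)}=\de_{kl}\la_k^\de$.

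\emph{Step 1: orthonormal basis of $H_0^1(\Om_\de;w_\de)$.} The system $\{(\la_k^\de)^{-1/2}\Phi_k^\de\}$ is orthonormal there. It is complete: if $u\in H_0^1(\Om_\de;w_\de)$ is orthogonal to every $\Phi_k^\de$, the eigen-relation gives $\la_k^\de(u,\Phi_k^\de)_{L^2}=0$. Since $\la_k^\de>0$, all $L^2$ coefficients vanish, so $u=0$.

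\emph{Step 2: the gradient expansion and the $H_0^1$ norm.} The Fourier coefficient of $u$ in this basis is $e_i=(\la_i^\de)^{-1/2}(u,\Phi_i^\de)_{H_0^1}=(\la_i^\de)^{1/2}u_i$. Parseval then gives $\nabla u=\sum u_i\nabla\Phi_i^\de$, with convergence in weighted $L^2$, and $\|u\|^2_{H_0^1(\Om_\de;w_\de)}=\sum \la_i^\de u_i^2$.

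\emph{Step 3: the $H^2$ characterization.}
\begin{itemize}
\item[($\Rightarrow$)] Let $u\in H^2(\Om_\de;w_\de)$ and set $\vp_n=\sum_{i\le n}u_i\Phi_i^\de$. We need the symmetry $(\mcA_\de u,\Phi_i^\de)=(u,\mcA_\de\Phi_i^\de)=\la_i^\de u_i$. Here it is easier than in the degenerate case: $\Om_\de$ is $C^3$ and $w_\de$ is smooth and bounded below on $\ol{\Om_\de}$ (as $|x|\ge\de$ there), so $\Phi_i^\de\in H^2(\Om_\de)\cap H_0^1(\Om_\de)$ by standard elliptic regularity, and integration by parts is legitimate. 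More simply, the identity follows from the weak formulation tested with $\Phi_i^\de\in H_0^1$. Cauchy--Schwarz applied to $(\mcA_\de u,\mcA_\de\vp_n)$ then gives $\sum_{i\le n}u_i^2(\la_i^\de)^2\le\|\mcA_\de u\|^2_{L^2}$.
\item[($\Leftarrow$)] Suppose $\sum u_i^2(\la_i^\de)^2<\iy$. For $\vp\in C_0^\iy(\Om_\de)$, Step 2 gives $(\mcA_\de u,\vp)=\sum u_i\la_i^\de(\Phi_i^\de,\vp)$. The series $\sum u_i\la_i^\de\Phi_i^\de$ converges in $L^2(\Om_\de)$, so it identifies $\mcA_\de u$ distributionally and yields the norm identity.
\end{itemize}

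\emph{Main obstacle.} The only delicate point is the symmetry $(\mcA_\de u,\Phi_i^\de)=(u,\mcA_\de\Phi_i^\de)$ for $u$ that is a priori only in $H^2(\Om_\de;w_\de)\cap H_0^1$, where $\mcA_\de u$ is known merely in the distributional sense. I would handle it by pairing $\mcA_\de u$ with approximants of $\Phi_i^\de$ in $C_0^\iy(\Om_\de)$. Since $\mcA_\de u\in L^2$ and $\nabla u\in L^2$, passing to the limit gives $(\mcA_\de u,\Phi_i^\de)=\int w_\de\nabla u\cdot\nabla\Phi_i^\de\df x=\la_i^\de u_i$. This is the argument implicitly used in Lemma \ref{06.28.L1}, and it is unproblematic here because $H_0^1(\Om_\de;w_\de)=H_0^1(\Om_\de)$.
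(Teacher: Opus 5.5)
Your proposal is correct and follows the paper's route exactly: the paper proves this lemma by referring back to the proof of Lemma \ref{06.28.L1}, namely the orthonormal basis $\{(\la_k^\de)^{-1/2}\Phi_k^\de\}$ of $H_0^1(\Om_\de;w_\de)$, truncation $\vp_n$ plus Cauchy--Schwarz for ($\Rightarrow$), and distributional identification of $\mcA_\de u$ against $C_0^\iy(\Om_\de)$ test functions for ($\Leftarrow$). Your density argument for $(\mcA_\de u,\Phi_i^\de)_{L^2(\Om_\de)}=\la_i^\de u_i$ usefully justifies the step the paper simply writes as $\int(\mcA u)\Phi_i=\int u\,\mcA\Phi_i$; it works verbatim in the degenerate case too, since $H_0^1(\Om;w)$ is by definition the closure of $C_0^\iy(\Om)$, so the appeal to $H_0^1(\Om_\de;w_\de)=H_0^1(\Om_\de)$ is not actually needed.
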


\begin{proof}
	The proof is the same as that of Lemma \ref{06.28.L1}.
\end{proof}

\begin{definition}\label{01.11.D1}
		We call a function
	\begin{equation*}
		y_\de\in L^2(0,T; H_0^1(\Om_\de;w_\de)), \mbox{ with } \pt_ty_\de\in L^2(0,T; H^{-1}(\Om_\de;w_\de))
	\end{equation*}
a weak solution of \eqref{01.11.14} with respect to $(y_\de^0,f_\de)$ if
	
		(i) for every $v\in H_0^1(\Om_\de;w_\de)$ and for a.e.~$t\in (0,T)$ we have
	\begin{equation*}
		\lg\pt_{t}y_\de, v\rg_{H^{-1}(\Om_\de;w_\de),H_0^1(\Om_\de;w_\de)}+\int_{\Om_\de} w_\de\nabla y_\de\cdot \nabla v\df x=\int_{\Om_\de} f_\de v\df x, 
	\end{equation*}
	
	(ii) $y_\de(0)=y_\de^0$. 
\end{definition}

\begin{theorem}\label{01.11.T2} 
		Let $\al\in (0,1)$, $y_\de^0\in L^2(\Om_\de)$, and $f_\de\in L^2(Q_\de)$. Then equation \eqref{01.11.14} with respect to $(y_\de^0,f_\de)$ admits a unique weak solution
	\begin{equation*}
		y_\de\in L^2(0,T; H_0^1(\Om_\de;w_\de)), \mbox{ with } \pt_ty_\de\in L^2(0,T; H^{-1}(\Om_\de;w_\de))
	\end{equation*} 
		and this solution satisfies the estimate
	\begin{equation}\label{01.11.16}
		\begin{split} 
			&\esssup_{t\in [0,T]}\|y_\de(t)\|_{L^2(\Om_\de)}+\|y_\de\|_{L^2(0,T;H_0^1(\Om_\de;w_\de))}+\|\pt_ty_\de\|_{L^2(0,T; H^{-1}(\Om_\de;w_\de))}\\
			&\leq C\left(\|f_\de\|_{L^2(Q_\de)}+\|y_\de^0\|_{L^2(\Om_\de)}\right), 
		\end{split} 
	\end{equation}
		where the constant $C>0$ depends only on $\al, N, T$, and $M$.
	
		Moreover, if we further assume that $y_\de^0\in H_0^1(\Om_\de;w_\de)$, then the weak solution belongs to
	\begin{equation*}
		y_\de\in L^2(0,T; D(\mcA_\de))\cap C([0,T]; H_0^1(\Om_\de;w_\de)), \mbox{ with } \pt_ty_\de\in L^2(Q_\de), 
	\end{equation*}
		and satisfies the estimate
	\begin{equation}\label{01.11.17}
		\begin{split} 
			&\esssup_{t\in [0,T]}\|y_\de(t)\|_{H_0^1(\Om_\de;w_\de)}+\|y_\de\|_{L^2(0,T; H^2(\Om-B(0,R_0)))}+\|y_\de\|_{L^2(0,T; D(\mcA_\de))}+\|\pt_ty_\de\|_{L^2(Q_\de)}\\
			&\leq C\left(\|f_\de\|_{L^2(Q_\de)}+\|y_\de^0\|_{H_0^1(\Om_\de;w_\de)}\right), 
		\end{split} 
	\end{equation}
		where the constant $C>0$ depends only on $\al, N, R_0, T, M$, and $\Om-B(0,\f{1}{2}R_0)$.
\end{theorem}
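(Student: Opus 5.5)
The plan is to repeat the proof of Theorem \ref{01.11.T1} on $\Om_\de$, using Lemmas \ref{01.11.L1}, \ref{01.11.L2}, \ref{03.09.L1} and the spectral decomposition \eqref{03.09.1}--\eqref{03.09.2}. At each step we check that every constant is independent of $\de$. The key observation is that the only constants entering the energy estimates are the Poincar\'e constant in \eqref{01.11.12} and the lower bound \eqref{03.09.3} on $\la_1^\de$. Both depend only on $\al,N,M$, because $\Om_\de\s\Om\s B(0,M-1)$.

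\emph{Galerkin scheme and weak solutions.} Set $y_\de^m=\sum_{n=1}^m y_{\de,n}^m(t)\Phi_n^\de$, where each coefficient solves $\f{\df}{\df t}y_{\de,n}^m+\la_n^\de y_{\de,n}^m=(f_\de,\Phi_n^\de)$ with initial value $(y_\de^0,\Phi_n^\de)$. Testing with $y_\de^m$ and using \eqref{01.11.12} gives the analogue of \eqref{01.11.4}. The duality argument (splitting $v=v^1+v^2$ along $\Span\{\Phi_n^\de\}_{n\le m}$) gives the analogue of \eqref{01.11.5}. We then pass to weak limits as in \eqref{01.11.6}. The limit is identified as a weak solution in the sense of Definition \ref{01.11.D1}, with the initial datum recovered by the same test-function argument as in Step 4. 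Uniqueness follows from the energy identity. This proves \eqref{01.11.16} with $C=C(\al,N,T,M)$. Since $\Om_\de$ is a $C^3$ domain with $|x|\ge\de$ on it, the problem is uniformly parabolic, so existence is also classical; the Galerkin route is chosen only to track constants.

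\emph{Improved regularity.} Assume $y_\de^0\in H_0^1(\Om_\de;w_\de)$. Testing with $\pt_t y_\de^m$ gives the analogue of \eqref{03.08.1}, using $\|y_\de^m(0)\|_{H_0^1}^2=\sum_{n\le m}\la_n^\de (y_{\de,n}^0)^2\le \|y_\de^0\|_{H_0^1(\Om_\de;w_\de)}^2$ from Lemma \ref{03.09.L1}. Testing with $\la_n^\de y_{\de,n}^m$, i.e.\ with $\mcA_\de y_\de^m$, bounds $\|\mcA_\de y_\de^m\|_{L^2(Q_\de)}$ by $\|f_\de\|_{L^2(Q_\de)}+\|\pt_ty_\de^m\|_{L^2(Q_\de)}$. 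Lower semicontinuity then gives the $D(\mcA_\de)$, $\pt_t y_\de$, and $L^\iy(0,T;H_0^1)$ bounds. Continuity into $H_0^1(\Om_\de;w_\de)$ follows from the standard Lions--Magenes lemma, since $y_\de\in L^2(0,T;D(\mcA_\de))$ and $\pt_ty_\de\in L^2(Q_\de)$. All constants here are absolute or depend only on $\al,N,T,M$.

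\emph{Local $H^2$ bound — the main obstacle.} The delicate point is the term $\|y_\de\|_{L^2(0,T;H^2(\Om-B(0,R_0)))}$, whose constant must not depend on $\de$. Standard elliptic regularity on $\Om_\de$ gives $\de$-dependent constants, because the weight degenerates near $\pt B(0,\de)$ and $\pt\Om_\de$ depends on $\de$. To avoid this, localize away from the origin. Take the same cutoff $\zeta$ as in Theorem \ref{01.11.T1}, with $\zeta=0$ on $B(0,\f34R_0)$. Since $\de<\f1{16}R_0$, we have $\Om_\de-B(0,\f12R_0)=\Om-B(0,\f12R_0)$, and $\zeta y_\de^m$ solves the same elliptic problem on the fixed domain $\Om-B(0,\f12R_0)$. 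On that domain $w\ge(R_0/2)^\al$ and $w$ is smooth. The right-hand side is $\zeta f_\de-\zeta\pt_ty_\de^m-2w\nabla\zeta\cdot\nabla y_\de^m-y_\de^m\Div(w\nabla\zeta)$, which is bounded in $L^2$ by the previous estimates. The Evans interior/boundary $H^2$ regularity theorems applied on this $\de$-independent domain then give a constant depending only on $\al,N,R_0$ and $\Om-B(0,\f12R_0)$. Integrating in time and passing to the limit yields \eqref{01.11.17}.
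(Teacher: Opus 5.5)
Your proposal is correct and follows essentially the same route as the paper, which simply reruns the Galerkin proof of Theorem~\ref{01.11.T1} on $\Om_\de$ with Lemma~\ref{01.11.L1}, Remark~\ref{01.11.R1} and Lemma~\ref{03.09.L1} in place of their $\Om$-counterparts. Like you, it uses $\Om_\de-B(0,\frac12R_0)=\Om-B(0,\frac12R_0)$ to obtain a $\de$-independent constant in the local $H^2$ estimate; your write-up is somewhat more explicit, for instance in tracking why the Poincar\'e constant is uniform in $\de$ and in invoking Lions--Magenes for $y_\de\in C([0,T];H_0^1(\Om_\de;w_\de))$, which the paper asserts without proof.
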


\begin{proof}
	The argument follows the proof of Theorem \ref{01.11.T1}.
	
	The estimates \eqref{01.11.16} and \eqref{01.11.17} are analogous to \eqref{01.11.7} and \eqref{01.11.8}, respectively. Note that the derivation of \eqref{01.11.7} and \eqref{01.11.8} relies only on \eqref{01.11.4}, \eqref{01.11.5}, \eqref{03.08.1}, \eqref{01.11.13}, \eqref{03.08.3}, and \eqref{03.08.2}. These estimates depend only on Lemma \ref{08.15.L1}, Remark \ref{08.16.R1}, Lemma \ref{06.28.L1}, and \cite[Theorem 1 in Chapter 6.3.1 (p.~327) and Theorem 4 in Chapter 6.3.2 (p.~334)]{Evans}.
	
	By replacing Lemma \ref{08.15.L1} with Lemma \ref{01.11.L1}, Remark \ref{08.16.R1} with Remark \ref{01.11.R1}, and Lemma \ref{06.28.L1} with Lemma \ref{01.11.L2}, and $\Om_\de-B(0,\f{1}{2}R_0)=\Om-B(0,\f{1}{2}R_0)$ for all $\de\in (0,\de_0)$, we obtain the estimates \eqref{01.11.16} and \eqref{01.11.17}.
\end{proof}

\begin{proposition}\label{03.08.P2}
		Let $y_\de^0\in L^2(\Om_\de)$ and $f_\de\in L^2(Q_\de)$. Then a function
	\begin{equation*}
		y_\de\in L^2(0,T; H_0^1(\Om_\de;w_\de))\cap H^1(0,T; H^{-1}(\Om_\de;w_\de)) 
	\end{equation*}
	is a weak solution of \eqref{01.11.14} with respect to $(y_\de^0, f_\de)$ if and only if
	\begin{equation*}
		y_\de\in L^2(0,T; H_0^1(\Om_\de;w_\de))
	\end{equation*}
	and for each $\psi\in C^\iy(\ol Q_\de)$ with $\supp \psi(t) \s \Om_\de$ for all $t\in [0,T]$ and $\psi(T)=0$ we get 
	\begin{equation}\label{03.09.4}
		-\iint_{Q_\de} y_\de\pt_t\psi\df x\df t+\iint_{Q_\de} w_\de\nabla y_\de\cdot \nabla\psi\df x\df t=\iint_{Q_\de} f_\de\psi\df x\df t+\int_{\Om_\de} y_\de^0\psi(0)\df x. 
	\end{equation}
\end{proposition}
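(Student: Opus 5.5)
The plan is to mirror the proof of Proposition \ref{03.08.P1} line by line. On $\Om_\de$ every structural ingredient used there has a counterpart: the spectral basis $\{\Phi_n^\de\}$ from \eqref{03.09.1}--\eqref{03.09.2}, the expansion Lemma \ref{03.09.L1}, and the Galerkin existence and uniqueness Theorem \ref{01.11.T2}. Since $H_0^1(\Om_\de;w_\de)=H_0^1(\Om_\de)$, the setting is in fact classical, but I will keep the weighted notation.

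\emph{Necessity.} Let $y_\de$ be a weak solution and $\psi$ an admissible test function. Then $\psi(t)\in H_0^1(\Om_\de;w_\de)$ for each $t$, so I test Definition \ref{01.11.D1}(i) with $v=\psi(t)$ and integrate over $(0,T)$. Because $y_\de\in L^2(0,T;H_0^1)\cap H^1(0,T;H^{-1})$, the Lions--Magenes lemma gives $y_\de\in C([0,T];L^2(\Om_\de))$ together with the integration-by-parts formula in time. This turns $\int_0^T\lg\pt_t y_\de,\psi\rg\,\df t$ into $-\iint_{Q_\de} y_\de\pt_t\psi\,\df x\,\df t-\int_{\Om_\de}y_\de(0)\psi(0)\,\df x$, using $\psi(T)=0$. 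Substituting $y_\de(0)=y_\de^0$ then gives \eqref{03.09.4}.

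\emph{Sufficiency.} The operator $\mcA_\de$ is bounded from $L^2(0,T;H_0^1(\Om_\de;w_\de))$ to $L^2(0,T;H^{-1}(\Om_\de;w_\de))$ with norm at most one. Restricting \eqref{03.09.4} to $\psi\in C_0^\iy(Q_\de)$ identifies the distributional derivative as $\pt_t y_\de=f_\de-\mcA_\de y_\de\in L^2(0,T;H^{-1})$. Next take $\psi=\zeta(t)v(x)$, where $v\in C_0^\iy(\Om_\de)$ and $\zeta$ is a smooth cutoff approximating the indicator of $(t-\de',t+\de')$. Letting the cutoff sharpen and then applying the Lebesgue differentiation theorem yields (i) for a.e.\ $t$. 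Density of $C_0^\iy(\Om_\de)$ in $H_0^1(\Om_\de;w_\de)$ extends this to all $v$.

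For the initial condition (ii), I expand $y_\de=\sum_n y_n(t)\Phi_n^\de$ and take $\psi=\xi(t)v$ with $\xi(T)=0$ and $\xi(0)$ arbitrary. Passing $v\to\Phi_n^\de$ in $H_0^1$ shows that each coefficient is a weak solution of the scalar ODE $y_n'+\la_n^\de y_n=f_n$ with $y_n(0)=(y_\de^0,\Phi_n^\de)$. These are exactly the Galerkin coefficients of Theorem \ref{01.11.T2}. The partial sums therefore converge strongly to $y_\de$ in $L^2(0,T;H_0^1)$ and weakly to the weak solution $z_\de$, so $y_\de=z_\de$ and $y_\de(0)=y_\de^0$.

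\emph{Main obstacle.} The delicate step is justifying $v\to\Phi_n^\de$ in the identity for $\psi=\xi v$, since $\Phi_n^\de$ is not compactly supported. The left side of \eqref{03.09.4} is continuous in $v$ under $H_0^1$ convergence, because $y_\de\in L^2(0,T;H_0^1)$ and every other term is an $L^2$ pairing. So approximating $\Phi_n^\de$ in $H_0^1(\Om_\de)$ by $C_0^\iy$ functions suffices. An alternative is to observe that, by $C^3$ boundary regularity, $\Phi_n^\de\in C^1(\ol\Om_\de)$ vanishes on $\pt\Om_\de$, and approximate it that way.
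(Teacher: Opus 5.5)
Your proposal is correct and follows the paper's route: the paper proves this proposition only by saying it is similar to Proposition~\ref{03.08.P1}, and you reproduce that argument (time integration by parts using $y_\de\in C([0,T];L^2(\Om_\de))$ for necessity; identification of $\pt_t y_\de=f_\de-\mcA_\de y_\de$ and a time cutoff with Lebesgue differentiation for (i); coefficient ODEs compared with the Galerkin solution of Theorem~\ref{01.11.T2} for (ii)). Your justification of the limit $v\to\Phi_n^\de$, via $H_0^1$-continuity of both sides of \eqref{03.09.4}, supplies a step the paper leaves implicit.
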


\begin{proof}
	The proof is similar to that of Proposition \ref{03.08.P1}.
\end{proof}

\begin{corollary}\label{01.12.C2}
	Let $y_\de^0\in H_0^1(\Om_\de;w_\de)$ satisfy $\mcA_\de y_\de^0\in H_0^1(\Om_\de;w_\de)$. Let $y_\de$ be the weak solution of \eqref{01.11.14} with respect to $(y_\de^0,0)$. Then
	\begin{equation*}
		y_\de\in L^2(0, T; D(\mcA_\de^2)), \mbox{ with } \pt_t y_\de\in L^2(0, T; D(\mcA_\de)), 
	\end{equation*}
	and 
	\begin{equation*}
		\begin{split}
			&\esssup_{t\in [0,T]}\|\mcA_\de y_\de(t)\|_{H_0^1(\Om_\de;w_\de)}+\|\pt_ty_\de\|_{L^2(0,T; H^2(\Om-B(0,R_0)))}+\|\pt_ty_\de\|_{L^2(0, T; D(\mcA_\de))}\\
			&\leq C\|\mcA y_\de^0\|_{H_0^1(\Om_\de;w_\de)},  
		\end{split}
	\end{equation*}
	where the constant $C>0$ depends only on $\al, T, N$ and $M$. 
\end{corollary}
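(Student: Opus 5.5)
The idea is to mirror the proof of Corollary \ref{01.12.C1}, replacing $\mcA$ by $\mcA_\de$ and Theorem \ref{01.11.T1} by Theorem \ref{01.11.T2}. First I will write the solution spectrally with the basis $\{\Phi_n^\de\}$ of Notation \ref{03.09.N1}:
\begin{equation*}
y_\de(t)=\sum_{n=1}^\iy y_n^0 e^{-\la_n^\de t}\Phi_n^\de, \quad y_n^0=(y_\de^0,\Phi_n^\de)_{L^2(\Om_\de)}.
\end{equation*}
This representation is justified by the Galerkin construction in the proof of Theorem \ref{01.11.T2} together with uniqueness. By Lemma \ref{03.09.L1}, the hypothesis $y_\de^0\in H_0^1(\Om_\de;w_\de)$ with $\mcA_\de y_\de^0\in H_0^1(\Om_\de;w_\de)$ is equivalent to $\sum_n (y_n^0)^2(\la_n^\de)^3<\iy$. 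In addition, $\mcA_\de y_\de^0=\sum_n y_n^0\la_n^\de\Phi_n^\de$, and
\begin{equation*}
\|\mcA_\de y_\de^0\|_{H_0^1(\Om_\de;w_\de)}^2=\sum_n (y_n^0)^2(\la_n^\de)^3.
\end{equation*}

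Next I will set $\wh y_\de=\sum_n y_n^0\la_n^\de e^{-\la_n^\de t}\Phi_n^\de$. Termwise differentiation gives $\wh y_\de=-\pt_t y_\de$, with the sign irrelevant for norms. The series converges in $L^2(0,T;H_0^1(\Om_\de;w_\de))$ because of the summability above. Moreover, $\wh y_\de$ solves \eqref{01.11.14} with data $(\mcA_\de y_\de^0,0)$: each coefficient solves the scalar ODE, so Proposition \ref{03.08.P2} (or uniqueness in Theorem \ref{01.11.T2}) identifies $\wh y_\de$ with the weak solution. Since $\mcA_\de y_\de^0\in H_0^1(\Om_\de;w_\de)$, estimate \eqref{01.11.17} applied to $\wh y_\de$ gives
\begin{equation*}
\|\pt_t y_\de\|_{L^2(0,T;H^2(\Om-B(0,R_0)))}+\|\pt_t y_\de\|_{L^2(0,T;D(\mcA_\de))}\leq C\|\mcA_\de y_\de^0\|_{H_0^1(\Om_\de;w_\de)},
\end{equation*}
with $C$ independent of $\de$. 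The bound $\esssup_t\|\mcA_\de y_\de(t)\|_{H_0^1}$ follows directly from Lemma \ref{03.09.L1}:
\begin{equation*}
\|\mcA_\de y_\de(t)\|_{H_0^1}^2=\sum_n (y_n^0)^2(\la_n^\de)^3e^{-2\la_n^\de t}\leq \|\mcA_\de y_\de^0\|_{H_0^1}^2.
\end{equation*}
Alternatively, it follows from $\mcA_\de y_\de=\pt_t$-commuting with the semigroup and the sup term in \eqref{01.11.17}.

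Finally, $\mcA_\de y_\de=-\pt_t y_\de$ lies in $L^2(0,T;D(\mcA_\de))$, so $y_\de\in L^2(0,T;D(\mcA_\de^2))$. The only delicate points are the justification of termwise differentiation and the identity $\mcA_\de\pt_t y_\de=\pt_t\mcA_\de y_\de$. Both reduce to absolute convergence of the coefficient series, which the summability hypothesis provides.

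I also expect a remaining obstacle in the uniformity of the $H^2(\Om-B(0,R_0))$ constant. That constant should rely on $\Om_\de-B(0,\f12R_0)=\Om-B(0,\f12R_0)$, exactly as in Theorem \ref{01.11.T2}, so the $C$ inherited from \eqref{01.11.17} is $\de$-independent. I read the right-hand side's $\mcA y_\de^0$ as $\mcA_\de y_\de^0$.
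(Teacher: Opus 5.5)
Your proposal is correct and is essentially the paper's argument, which repeats the proof of Corollary \ref{01.12.C1}: expand in the eigenbasis $\{\Phi_n^\de\}$, note that $\pt_t y_\de$ is, up to sign, the weak solution with datum $\mcA_\de y_\de^0\in H_0^1(\Om_\de;w_\de)$, and apply \eqref{01.11.17} to it together with $\mcA_\de\pt_t y_\de=\pt_t\mcA_\de y_\de$. Your side remarks are also right: $\wh y_\de=-\pt_t y_\de$ (the paper drops the sign), the right-hand side should read $\mcA_\de y_\de^0$, and the constant inherited from \eqref{01.11.17} also depends on $R_0$ and $\Om-B(0,\f{1}{2}R_0)$, exactly as in the paper's own derivation.
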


\begin{proof}
	The proof is the same as that of Corollary \ref{01.12.C1}.
\end{proof}

We now introduce the shape design framework for equation \eqref{01.11.14}.

\begin{definition}\label{01.11.D2}
For each $y_\de\in L^2(Q_\de)$, we denote
\begin{equation*}
	Ey_\de=
	\begin{cases}
		y_\de, &\mbox{on } Q_\de, \\
		0, &\mbox{on } Q-Q_\de.
	\end{cases}
\end{equation*}
Then $Ey_\de\in L^2(Q)$ and $\|Ey_\de\|_{L^2(Q)}=\|y_\de\|_{L^2(Q_\de)}$. If $y_\de\in L^2(0,T; H_0^1(\Om_\de;w_\de))$, then 
\begin{equation*}
	Ey_\de\in L^2(0,T; H_0^1(\Om;w)), \mbox{ and } \|Ey_\de\|_{L^2(0,T; H_0^1(\Om;w))}=\|y_\de\|_{L^2(0,T; H_0^1(\Om_\de;w_\de))}. 
\end{equation*}
If $y_\de\in H^1(0,T; L^2(\Om_\de))$, then
\begin{equation*}
	Ey_\de\in H^1(0,T; L^2(\Om)), \pt_tEy_\de=E(\pt_ty_\de),\mbox{ and } \|Ey_\de\|_{H^1(0,T; L^2(\Om))}=\|y_\de\|_{H^1(0,T; L^2(\Om_\de))}. 
\end{equation*} 
\end{definition}

The following theorem is one of the main results of the paper.

\begin{theorem}\label{01.11.T3}
	Assume that Assumption \ref{Assumption (H)} holds. Let $\al\in (0,1)$, $y^0\in C_0^\iy(\Om)$, and $f\in L^2(Q)$. Let $\de_0=\f{1}{16}\min\left\{1,R_0,M\right\}$.
	Let $y$ be the weak solution of \eqref{12.14.1} with respect to $(y^0,f)$, and let $y_\de\ (0<\de<\de_0)$ be the weak solution of \eqref{01.11.14} with respect to $(y^0|_{\Om_\de}, f|_{Q_\de})$. Then, up to a subsequence, we have
	\begin{equation}\label{01.11.18}
		\begin{split}
			Ey_\de
			&\ra y \hspace{3.5mm} \mbox{ weakly in } L^2(0,T; H_0^1(\Om;w)), \\
			\pt_t Ey_\de
			&\ra \pt_ty \mbox{ weakly in } L^2(Q),\\
			Ey_\de
			&\ra y\hspace{3.5mm} \mbox{ strongly in } L^2(Q). 
		\end{split}
	\end{equation}
	Moreover, if we additionally assume that $f=0$, then, up to a subsequence, we get
	\begin{equation}\label{01.12.5}
		\f{\pt Ey_\de}{\pt \nu}=\f{\pt y_\de}{\pt \nu}\ra \f{\pt y}{\pt \nu} \mbox{ strongly in } L^2(0,T; L^2(\Ga^+)). 
	\end{equation}
\end{theorem}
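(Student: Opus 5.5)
Since $y^0\in C_0^\iy(\Om)$, for all sufficiently small $\de$ we have $\supp y^0\s\Om_\de$, because $\Om-\Om_\de\s B(0,2\de)$ and $\supp y^0$ stays at positive distance from $0\in\pt\Om$. Hence $y^0|_{\Om_\de}\in H_0^1(\Om_\de;w_\de)$ with $\|y^0|_{\Om_\de}\|_{H_0^1(\Om_\de;w_\de)}=\|y^0\|_{H_0^1(\Om;w)}$.

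\textbf{Uniform bounds.} Estimate \eqref{01.11.17} of Theorem \ref{01.11.T2}, whose constant is independent of $\de$, gives
\begin{equation*}
\esssup_t\|y_\de(t)\|_{H_0^1(\Om_\de;w_\de)}+\|y_\de\|_{L^2(0,T;H^2(\Om-B(0,R_0)))}+\|\pt_ty_\de\|_{L^2(Q_\de)}\leq C\left(\|f\|_{L^2(Q)}+\|y^0\|_{H_0^1(\Om;w)}\right).
\end{equation*}
By Definition \ref{01.11.D2}, $Ey_\de$ is bounded in $L^2(0,T;H_0^1(\Om;w))\cap H^1(0,T;L^2(\Om))$. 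Extracting a subsequence, $Ey_\de\ra z$ weakly in $L^2(0,T;H_0^1(\Om;w))$ and $\pt_tEy_\de\ra\pt_tz$ weakly in $L^2(Q)$. Lemma \ref{08.15.L4} together with the Aubin--Lions lemma then gives $Ey_\de\ra z$ strongly in $L^2(Q)$.

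\textbf{Identifying the limit.} Take $\psi\in C^\iy(\ol Q)$ with $\supp\psi(t)\s\Om$ and $\psi(T)=0$. As before, for small $\de$ the support of $\psi$ lies in $\Om_\de$, so $\psi$ is admissible in \eqref{03.09.4}. Rewrite \eqref{03.09.4} over $Q$ using $E$; the integrands vanish off $Q_\de$, and $\iint_{Q_\de}f\psi=\iint_Q f\psi$. Passing to the limit yields \eqref{03.08.4} for $z$. By Proposition \ref{03.08.P1} and uniqueness in Theorem \ref{01.11.T1}, $z=y$. This proves \eqref{01.11.18}.

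\textbf{Normal derivatives when $f=0$.} This is the main obstacle, since weak $H^2$ convergence only yields weak convergence of traces. The plan is to upgrade to strong convergence in $L^2(0,T;H^{s}(\Om-B(0,R_0')))$ for some $s\in(3/2,2)$, where $R_0'$ is slightly larger than $R_0$, so that the normal trace on $\Ga^+\s\pt\Om-B(0,R_0)$ is continuous. To obtain compactness, first assume $\mcA y^0\in H_0^1(\Om;w)$, which holds for instance when $y^0\in C_0^\iy(\Om)$ since $w$ is smooth on $\supp y^0$. Corollary \ref{01.12.C2} then bounds $\pt_ty_\de$ in $L^2(0,T;H^2(\Om-B(0,R_0)))$ uniformly in $\de$. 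Consequently $y_\de$ is bounded in $H^1(0,T;H^2(\Om-B(0,R_0)))$, and the compact embedding $H^2\hra H^s$ with Aubin--Lions gives strong convergence in $L^2(0,T;H^s(\Om-B(0,R_0)))$.

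The trace must be taken on $\Ga^+$, which lies on $\pt\Om-B(0,R_0)$; here the geometry of $\Om_\de$ coincides with that of $\Om$ for $|x|\geq R_0/2$. To have a little room, I would apply the interior-to-boundary cutoff argument of Theorem \ref{01.11.T1} with $\zeta$ supported outside $B(0,\tfrac34R_0)$. This gives uniform $H^2$ bounds for $y_\de$ and $\pt_ty_\de$ on $\Om-B(0,\tfrac34R_0)$, a neighbourhood of $\Ga^+$ relative to $\ol\Om$.

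The trace theorem then gives $\f{\pt y_\de}{\pt\nu}\ra\f{\pt y}{\pt\nu}$ strongly in $L^2(0,T;H^{s-3/2}(\Ga^+))\s L^2(0,T;L^2(\Ga^+))$, which is \eqref{01.12.5}. The step I expect to be most delicate is checking that the regularity hypothesis of Corollary \ref{01.12.C2} holds for $y^0|_{\Om_\de}$ uniformly in $\de$. If only the $y^0$-dependent constant matters, this should follow because $\mcA_\de y^0=\mcA y^0$ is smooth and compactly supported in $\Om_\de$.
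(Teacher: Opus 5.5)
Your proposal is correct and follows essentially the paper's proof: the same $\de$-uniform bounds from \eqref{01.11.17}, the same identification of the limit through Propositions \ref{03.08.P2} and \ref{03.08.P1} and uniqueness, and, for \eqref{01.12.5}, the same use of Corollary \ref{01.12.C2} to feed an Aubin--Lions compactness argument. The only variation is where compactness is applied: you use $H^1(0,T;H^2)\hookrightarrow L^2(0,T;H^s)$ with $s>3/2$ in the interior and then take normal traces, so the limit is identified automatically from $Ey_\de\ra y$ in $L^2(Q)$, whereas the paper applies Aubin--Lions to the traces themselves (an $L^2(0,T;H^{1/2})$ bound on $\f{\pt y_\de}{\pt\nu}$ plus an $L^2$ bound on its time derivative) and identifies the weak limit separately via Green's formula; note also that your $R_0'$ should be slightly smaller than $R_0$, not larger, as your later choice $\f34R_0$ already reflects.
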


\begin{proof}
		We prove this theorem in the following steps.
	
	{\it Step 1}. We prove \eqref{01.11.18}. 
	
		By a standard subsequence argument, it suffices to prove the case
		\begin{equation*}
		\de_0=\min\left\{\f{1}{16}, \f{1}{16}R_0, \f{1}{16}M, \dist(\supp y^0, \pt\Om)\right\}. 
	\end{equation*}
		For all $0<\de<\de_0$, from $\Om_\de \s \Om$, $y^0|_{\Om_\de}\in C_0^\iy(\Om_\de;w_\de)$, \eqref{01.11.17}, and Definition \ref{01.11.D2}, we obtain
	\begin{equation*}
		\begin{split} 
			&\esssup_{t\in [0,T]}\|Ey_\de(t)\|_{H_0^1(\Om;w)}+\|\pt_tEy_\de\|_{L^2(Q)}\leq C\left(\|f\|_{L^2(Q)}+\|y^0\|_{H_0^1(\Om;w)}\right), 
		\end{split} 
	\end{equation*}
		where the constant $C>0$ depends only on $\al, N, T$, and $M$, and is independent of $\de\in (0,\de_0)$. Therefore, there exists
	\begin{equation}\label{03.09.5}
		z\in L^2(0,T; H_0^1(\Om;w))\cap H^1(0,T; L^2(\Om))
	\end{equation}
	such that 
	\begin{equation}\label{01.11.19}
		\begin{split} 
		Ey_\de
		&\ra z \mbox{ weakly in } L^2(0,T;H_0^1(\Om;w)),\\
		\pt_t Ey_\de 
		&\ra \pt_t z \mbox{ weakly in } L^2(Q). 
		\end{split} 
	\end{equation} 
			Note that $y_\de$ is the weak solution of \eqref{01.11.14} with respect to $(y^0|_{\Om_\de}, f|_{Q_\de})$. Let $\psi\in C^\iy(\ol Q)$ satisfy $\supp\psi(t)\s \Om$ for all $t\in [0,T]$ and $\psi(T)=0$. Since $\supp\psi$ is compactly contained in $\Om\ts [0,T]$, there exists $\de_\psi\in (0,\de_0)$ such that $\supp\psi(t)\s \Om_\de$ for all $t\in [0,T]$ whenever $0<\de<\de_\psi$. Hence, by Proposition \ref{03.08.P2}, we get
	\begin{equation*}
		-\iint_{Q_\de} y_\de \pt_t\psi\df x\df t+\iint_{Q_\de} w_\de \nabla y_\de \cdot \nabla\psi\df x\df t=\iint_{Q_\de} f_\de \psi\df x\df t+\int_{\Om_\de} y^0 \psi(0)\df x.
	\end{equation*} 
		This shows that
	\begin{equation*}
		-\iint_{Q} (Ey_\de) \pt_t\psi\df x\df t+\iint_{Q} w \nabla Ey_\de \cdot \nabla\psi\df x\df t=\iint_{Q} f \psi\df x\df t+\int_\Om y^0\psi(0)\df x.
	\end{equation*}
		Combined with \eqref{01.11.19}, this yields
	\begin{equation*}
		-\iint_Q z\pt_t\psi\df x\df t+\iint_Q w\nabla z\cdot \nabla\psi\df x\df t=\iint_Q f\psi\df x\df t+\int_\Om y^0\psi(0)\df x. 
	\end{equation*}
			Hence, by Proposition \ref{03.08.P1} and \eqref{03.09.5}, $z$ is a weak solution of \eqref{12.14.1} with respect to $(y^0,f)$. By uniqueness, we must have $z=y$.
	
			Moreover, since the embeddings $H_0^1(\Om;w)\hra L^2(\Om)$ and
	\begin{equation*}
		\left\{z\in L^2(0,T; H_0^1(\Om;w))\colon \pt_t z\in L^2(Q)\right\}\hra L^2(Q)
	\end{equation*}
		are compact, we obtain the third convergence in \eqref{01.11.18}.
	
		{\it Step 2}. We prove \eqref{01.12.5}. 
		
			Since $y^0\in C_0^\iy(\Om)$, we have $y^0\in D(\mcA^2)$, that is, $y^0\in D(\mcA)$ and $\mcA y^0\in D(\mcA)$.
		
		From \eqref{01.11.17} we get
		\begin{equation*}
			\begin{split}
				\|Ey_\de\|_{L^2(0,T; H^2(\Om-B(0,R_0)))}=\|y_\de\|_{L^2(0,T; H^2(\Om-B(0,R_0)))}\leq C\|y^0\|_{H_0^1(\Om_\de;w_\de)}, 
			\end{split}
		\end{equation*}
			Then
		\begin{equation}\label{03.09.6}
			\left\|\f{\pt y_\de}{\pt\nu}\right\|_{L^2(0,T; H^\f{1}{2}(\pt\Om-B(0,R_0)))}\leq C\|y^0\|_{H_0^1(\Om;w)}
		\end{equation}
		by the classical Sobolev trace theorem, 
			where the positive constant $C$ depends only on $\al, N, R_0, T, M$, and $\Om-B(0,\f{1}{2}R_0)$.
			Therefore, there exists a function $z\in L^2(0,T; H^\f{1}{2}(\pt\Om-B(0,R_0)))$ such that, up to a subsequence,
		\begin{equation}\label{03.09.8}
			\f{\pt y_\de}{\pt\nu}\ra z \mbox{ weakly in }L^2(0,T; H^\f{1}{2}(\pt\Om-B(0,R_0))).
		\end{equation}
			Let $\xi\in C_0^\iy((\pt\Om-B(0,R_0))\ts (0,T))$, and extend it to a function in $C_0^\iy((\ol\Om-B(0,\f{1}{2}R_0))\ts (0,T))$, still denoted by $\xi$. By \eqref{01.11.18}, we know that $Ey_\de\ra y$ weakly in $L^2(0,T;H_0^1(\Om;w))$ and $\pt_tEy_\de\ra \pt_ty$ weakly in $L^2(Q)$. Since $f=0$, we have $\mcA_\de y_\de=-\pt_ty_\de$. Consequently,
		\begin{equation*}
			\begin{split}
				\iint_{(\pt\Om-B(0,R_0))\ts (0,T)} \f{\pt y_\de}{\pt\nu}\xi\df S\df t
				&=\iint_{Q_\de} (\mcA_\de y_\de)\xi\df x\df t+\iint_{Q_\de} w_\de\nabla y_\de\cdot \nabla\xi\df x\df t\\
				&=-\iint_Q (\pt_tEy_\de)\xi\df x\df t+\iint_Q w\nabla(Ey_\de)\cdot \nabla\xi\df x\df t\\
				&\ra -\iint_Q (\pt_ty)\xi\df x\df t+\iint_Q w\nabla y\cdot \nabla\xi\df x\df t\\
					&=\iint_{(\pt\Om-B(0,R_0))\ts (0,T)} \f{\pt y}{\pt\nu}\xi\df S\df t.
				\end{split}
			\end{equation*}
				Hence the weak limit is exactly $\f{\pt y}{\pt\nu}$, that is,
		\begin{equation}\label{03.09.7}
			\f{\pt y_\de}{\pt\nu}\ra \f{\pt y}{\pt\nu} \mbox{ weakly in }L^2(0,T; H^\f{1}{2}(\pt\Om-B(0,R_0))).
		\end{equation}
		
		Now, from Corollary \ref{01.12.C2}, we get 
		\begin{equation*}
			\|\pt_ty_\de\|_{L^2(0,T; H^2(\Om-B(0,R_0)))}\leq C\|\mcA y^0\|_{H_0^1(\Om;w)}, 
	\end{equation*}
		Thus
		\begin{equation*}
			\left\|\pt_t\f{\pt y_\de}{\pt\nu}\right\|_{L^2(0,T; L^2(\pt\Om-B(0,R_0)))}=\left\|\f{\pt (\pt_ty_\de)}{\pt \nu}\right\|_{L^2(0,T; L^2(\pt\Om-B(0,R_0)))}\leq C\|\mcA y^0\|_{H_0^1(\Om;w)}, 
		\end{equation*}
			where the positive constant $C$ depends only on $\al, N, R_0, T, M$, and $\Om-B(0,\f{1}{2}R_0)$. Combining this with \eqref{03.09.6}, the compact embedding $H^\f{1}{2}(\pt\Om-B(0,R_0))\hra L^2(\pt\Om-B(0,R_0))$, and
	\begin{equation*}
		\begin{split} 
		&\left\{z\in L^2(0,T; H^\f{1}{2}(\pt\Om-B(0,R_0)))\colon \pt_tz\in L^2(0,T; L^2(\pt\Om-B(0,R_0)))\right\}\\
			&\hra L^2(0,T; L^2(\pt\Om-B(0,R_0)))
			\end{split} 
		\end{equation*}
			we conclude that, up to a subsequence,
		\begin{equation*}
			\f{\pt y_\de}{\pt\nu}\ra g \mbox{ strongly in }L^2(0,T;L^2(\pt\Om-B(0,R_0)))
		\end{equation*}
		for some $g\in L^2(0,T;L^2(\pt\Om-B(0,R_0)))$. In view of \eqref{03.09.7}, this strong limit must coincide with $\f{\pt y}{\pt\nu}$. Therefore,
		\begin{equation*}
			\f{\pt y_\de}{\pt\nu}\ra \f{\pt y}{\pt\nu} \mbox{ strongly in }L^2(0,T;L^2(\pt\Om-B(0,R_0))).
		\end{equation*}
		Since $\Ga^+\s \pt\Om-B(0,R_0)$ by Assumption \ref{Assumption (H)}, we obtain \eqref{01.12.5}.
	
		This completes the proof.
\end{proof}

\section{Observability on the Boundary}\label{S4}

In this section, we establish boundary observability for the backward equation associated with \eqref{12.14.1}. We first prove a boundary observability estimate for the approximate parabolic equations \eqref{01.06.19}, and then pass to the limit.

\subsection{Boundary Observability for Approximate Parabolic Equations}

Let $\de\in (0,\de_0)$.
We consider the backward degenerate parabolic equation
\begin{equation}\label{01.06.19}
	\begin{cases}
		\pt_t\vp_\de-\mcA_\de \vp_\de=f_\de, &\mbox{in }Q_\de, \\
		\vp_\de=0, &\mbox{on }\pt Q_\de,\\ 
		\vp_\de(T)=\vp_\de^T, &\mbox{in }\Om_\de, 
	\end{cases}
\end{equation}
where $\vp_\de^T\in L^2(\Om_\de)$ and $f_\de\in L^2(Q_\de)$. For each $\de\in (0,\de_0)$, equation \eqref{01.06.19} is uniformly backward parabolic.

Choose
\begin{equation}\label{01.11.22}
	\eta(x)=|x|^{2-\al}. 
\end{equation}
Let 
\begin{equation}\label{01.11.23}
	\Theta=\f{1}{[t(T-t)]^4}, \quad \xi=\Theta (\ga-\eta) 
\end{equation}
with $\ga=|\eta|_{L^\iy(\Om)}+1$,
and let
	\begin{equation}\label{01.06.18}
		\psi=e^{-s\xi}\vp_\de\ \Lra \ \vp_\de=e^{s\xi}\psi,
	\end{equation}
	where $\vp_\de$ is the weak solution of \eqref{01.06.19} with respect to $(\vp_\de^T,f_\de)$.
	
By \eqref{01.06.18}, we have
\begin{equation}\label{01.11.28}
	\psi(t)=\nabla\psi(t)=0 \mbox{ at } t=0,T, 
\end{equation}
and 
\begin{equation}\label{01.11.29}
	\psi=\pt_t\psi=0 \mbox{ on }\pt Q_\de.
\end{equation}
Moreover, from \eqref{01.11.23}, we have
\begin{equation}\label{01.11.24}
	\begin{split}
		|\Theta'|\leq C\Theta^\f{5}{4},\ |\Theta''|\leq C\Theta^\f{3}{2}, \ \max\{\Theta^2,\Theta^\f{9}{4}\}\leq C\Theta^3, \mbox{ and } |\xi_t|\leq C\ga \Theta^\f{5}{4}, \ |\xi_{tt}|\leq C\ga \Theta^\f{3}{2},
	\end{split}
\end{equation}
where the constant $C>0$ depends only on $T$. From \eqref{01.11.23}, we also obtain
\begin{equation}\label{01.11.27}
	\begin{split}
		\nabla\xi=-\Theta \nabla\eta,\quad \nabla\xi_t=\pt_t\nabla\xi=-\Theta'\nabla\eta, 
	\end{split}
\end{equation}
and
\begin{equation}\label{01.12.3}
	\begin{split} 
	 &\nabla\eta=(2-\al)|x|^{-\al}x, \quad w\nabla\eta=(2-\al)x,\quad  w\nabla\eta\cdot\nabla\eta =(2-\al)^2|x|^{2-\al}, \\
	 &\Div(w\nabla\eta)=(2-\al)N,\ \nabla [\Div(w\nabla\eta)]=0,\  w\nabla\eta\cdot \nabla (w\nabla\eta\cdot\nabla\eta)=(2-\al)^4|x|^{2-\al},\\
	 &\f{\pt^2\eta}{\pt x_i\pt x_j}=-(2-\al)\al |x|^{-\al-2}x_ix_j+(2-\al)|x|^{-\al}\de_{ij}, 
	 \end{split} 
\end{equation} 
where $\de_{ij}=1$ for $i=j$ and $\de_{ij}=0$ for $i\neq j$, 
and 
\begin{equation}\label{01.12.2}
	w\nabla \eta \cdot \nu=(2-\al)x\cdot\nu \mbox{ on }\pt\Om_\de, \mbox{ for all } \de\in (0, \de_0). 
\end{equation}

From \eqref{01.11.23}, we obtain
\begin{equation}\label{01.11.25}
	\begin{split}
		e^{-s\xi}f_\de=P_1\psi+P_2\psi, 
	\end{split}
\end{equation}
where 
\begin{equation}\label{01.11.26}
	\begin{split}
		P_1\psi
		&\equiv \sum_{i=1}^3P_{1i}\psi\equiv \psi_t+2sw\nabla\psi\cdot\nabla\xi+s \psi\Div(w\nabla\xi),\\
		P_2\psi
		&\equiv \sum_{i=1}^3P_{2i}\psi\equiv \Div(w\nabla\psi)+s\psi\xi_t+s^2\psi w\nabla\xi\cdot\nabla\xi. 
	\end{split}
\end{equation}
Here and below, for simplicity of notation, we often omit the subscript $\de$. For example, we write $w$ for $w_\de$, $\psi$ for $\psi_\de$, and similarly for other quantities, while the integration domains remain $\Om_\de$ and $Q_\de$.

\subsubsection{Computation}

Since \eqref{01.06.19} is uniformly parabolic for each $\de \in (0,\de_0)$, the following integrations by parts are justified. We emphasize, however, that the integration by parts involving the boundary term on $\partial Q_\de$ is not available in the limit case $\de=0$.

(1) We compute $(P_{11}\psi, P_2\psi)_{L^2(Q_\de)}$. 

Using \eqref{01.11.28}, \eqref{01.11.29}, and \eqref{01.11.27}, we obtain
\begin{equation*}
	\begin{split}
		(P_{11}\psi, P_2\psi)_{L^2(Q_\de)}
		&=-\f{s}{2}\iint_{Q_\de}\psi^2\xi_{tt}\df x\df t-s^2\iint_{Q_\de} \psi^2w\nabla\xi\cdot\nabla\xi_t\df x\df t\\
		&=-\f{s}{2}\iint_{Q_\de}\psi^2\xi_{tt}\df x\df t-s^2\iint_{Q_\de}\Theta \Theta'\psi^2w\nabla\eta\cdot\nabla\eta\df x\df t. 
	\end{split}
\end{equation*}

(2) We compute $(P_{12}\psi+P_{13}\psi, P_{21}\psi)_{L^2(Q_\de)}$. 

Using \eqref{01.11.29} and \eqref{01.11.27}, we obtain
\begin{equation*}
	\begin{split}
		&2s\iint_{Q_\de} \Theta w^2\nabla\psi\cdot [(D^2\psi)\nabla\eta]\df x=s\iint_{Q_\de} \Theta w^2\nabla |\nabla\psi|^2\cdot \nabla\eta\df x\df t\\
		&=s\iint_{\pt Q_\de} \Theta (w\nabla\psi\cdot\nabla\psi) (w\nabla\eta\cdot\nu)\df S\df t\\
		&\hspace{4.5mm}-s\iint_{Q_\de} \Theta (w\nabla\psi\cdot\nabla\psi)(\nabla w\cdot\nabla\eta)\df x\df t-s\iint_{Q_\de} \Theta (w\nabla\psi\cdot\nabla\psi)\Div(w\nabla\eta)\df x\df t, 
	\end{split}
\end{equation*}
and then 
\begin{equation*}
	\begin{split}
		&(P_{12}\psi, P_{21}\psi)_{L^2(Q_\de)}\\
		&=2s\iint_{\pt Q_\de}(w\nabla\psi\cdot\nu)(w\nabla\psi\cdot \nabla\xi)\df S\df t-2s\iint_{Q_\de}w\nabla\psi\cdot\nabla\left(w\nabla\psi\cdot\nabla\xi\right)\df x\df t\\
		&=-2s\iint_{\pt Q_\de}\Theta  (w\nabla\psi\cdot\nu)(w\nabla\psi\cdot\nabla\eta)\df S\df t+2s\iint_{Q_\de}\Theta w\nabla\psi\cdot \nabla (w\nabla\psi\cdot\nabla\eta)\df x\df t\\
		&=-2s\iint_{\pt Q_\de}\Theta  (w\nabla\psi\cdot\nu)(w\nabla\psi\cdot\nabla\eta)\df S\df t+2s\iint_{Q_\de}\Theta (w\nabla\psi\cdot \nabla w)(\nabla\psi\cdot\nabla\eta)\df x\df t\\
		&\hspace{4.5mm}+2s\iint_{Q_\de}\Theta w^2\nabla\psi\cdot[(D^2\eta)\nabla\psi]\df x\df t+2s\iint_{Q_\de} \Theta w^2\nabla\psi\cdot [(D^2\psi)\nabla\eta]\df x\df t\\
		&=-s\iint_{\pt Q_\de}\Theta  (w\nabla\psi\cdot\nabla\psi)(w\nabla\eta\cdot\nu)\df S\df t\\
		&\hspace{4.5mm}+2s\iint_{Q_\de}\Theta (w\nabla\psi\cdot \nabla w)(\nabla\psi\cdot\nabla\eta)\df x\df t+2s\iint_{Q_\de}\Theta w^2\nabla\psi\cdot[(D^2\eta)\nabla\psi]\df x\df t\\
		&\hspace{4.5mm}-s\iint_{Q_\de} \Theta (w\nabla\psi\cdot\nabla\psi)(\nabla w\cdot\nabla\eta)\df x\df t-s\iint_{Q_\de} \Theta (w\nabla\psi\cdot\nabla\psi)\Div(w\nabla\eta)\df x\df t
	\end{split}
\end{equation*}
by \eqref{01.11.29} and  \eqref{01.11.27}. 
Using again \eqref{01.11.29} and \eqref{01.11.27}, we obtain
\begin{equation*}
	\begin{split}
		&(P_{13}\psi, P_{21}\psi)_{L^2(Q_\de)}\\
		&=s\iint_{Q_\de}\Theta(w\nabla\psi\cdot\nabla\psi)\Div(w\nabla\eta)\df x\df t+s\iint_{Q_\de}\Theta \psi w\nabla\psi\cdot\nabla [\Div(w\nabla\eta)]\df x\df t. 
	\end{split}
\end{equation*}
Therefore,
\begin{equation*}
	\begin{split}
		&(P_{12}\psi+P_{13}\psi, P_{21}\psi)_{L^2(Q_\de)}\\
		&=-s\iint_{\pt Q_\de}\Theta  (w\nabla\psi\cdot\nabla\psi)(w\nabla\eta\cdot\nu)\df S\df t\\
		&\hspace{4.5mm}+2s\iint_{Q_\de}\Theta (w\nabla\psi\cdot \nabla w)(\nabla\psi\cdot\nabla\eta)\df x\df t+2s\iint_{Q_\de}\Theta w^2\nabla\psi\cdot[(D^2\eta)\nabla\psi]\df x\df t\\
		&\hspace{4.5mm}-s\iint_{Q_\de}\Theta (w\nabla\psi\cdot \nabla\psi)(\nabla w\cdot \nabla\eta)\df x\df t+s\iint_{Q_\de}\Theta \psi w\nabla\psi\cdot\nabla [\Div(w\nabla\eta)]\df x\df t. 
	\end{split}
\end{equation*}

\begin{remark}\label{01.08.R2}
		The term
	\begin{equation*} 
		-s\iint_{\pt Q_\de}\Theta  (w\nabla\psi\cdot\nabla\psi)(w\nabla\eta\cdot\nu)\df S\df t
	\end{equation*} 
		is well defined for each $\delta \in (0, \delta_0)$. However, when $\delta = 0$, it becomes difficult to obtain
	\begin{equation*}
		\f{\pt \psi}{\pt \nu}\in L^2([\pt\Om\cap B(0,\de_0)]\ts (0,T)),
	\end{equation*}
		since this would require improved boundary regularity of $\psi$ near $0\in\R^N$.
		At present, we do not know how to obtain such regularity.
\end{remark}

(3) We compute $(P_{12}\psi+P_{13}\psi,P_{22}\psi)_{L^2(Q_\de)}$. 

Using \eqref{01.11.29} and \eqref{01.11.27}, we obtain
\begin{equation*}
	\begin{split}
		&(P_{12}\psi+P_{13}\psi,P_{22}\psi)_{L^2(Q_\de)}=-s^2\iint_{Q_\de}\psi^2w\nabla\xi_t\cdot \nabla\xi\df x\df t=-s^2\iint_{Q_\de}\Theta\Theta'\psi^2w\nabla\eta\cdot\nabla\eta\df x\df t. 
	\end{split}
\end{equation*}

(4) We compute $(P_{12}\psi+P_{13}\psi, P_{23}\psi)_{L^2(Q_\de)}$. 

Using \eqref{01.11.29} and \eqref{01.11.27}, we obtain
\begin{equation*}
	\begin{split}
		&(P_{12}\psi+P_{13}\psi, P_{23}\psi)_{L^2(Q_\de)}\\
		&=-s^3\iint_{Q_\de} \psi^2w\nabla\xi\cdot \nabla(w\nabla\xi\cdot\nabla\xi)\df x\df t=s^3\iint_{Q_\de} \Theta^3\psi^2w\nabla\eta\cdot \nabla(w\nabla\eta\cdot\nabla\eta)\df x\df t.
	\end{split}
\end{equation*}

\subsubsection{Estimate}

From \eqref{01.11.24} and \eqref{01.12.3}, we obtain
\begin{equation*}
	\begin{split}
		\left|(P_{11}\psi, P_2\psi)_{L^2(Q_\de)}\right|\leq Cs\ga \iint_{Q_\de}\Theta^\f{3}{2}\psi^2\df x\df t+Cs^2\iint_{Q_\de}\Theta^\f{9}{4}\psi^2|x|^{2-\al}\df x\df t,
	\end{split}
\end{equation*}
where the constant $C>0$ depends only on $\al$ and $T$.
Moreover, \eqref{01.12.3} and \eqref{01.12.2} imply
\begin{equation*}
	\begin{split}
		&(P_{12}\psi+P_{13}\psi, P_{21}\psi)_{L^2(Q_\de)}\\
		&\geq -(2-\al)s\iint_{\Ga^+\ts (0,T)}\Theta (w\nabla\psi\cdot\nabla\psi)(x\cdot\nu)\df S\df t+(2-\al)^2 s\iint_{Q_\de}\Theta (w\nabla\psi\cdot\nabla\psi)\df x\df t;
	\end{split}
\end{equation*}
Furthermore, \eqref{01.11.24} yields
\begin{equation*}
	\left|(P_{12}\psi+P_{13}\psi, P_{22}\psi)_{L^2(Q_\de)}\right|\leq Cs^2\iint_{Q_\de}\Theta^\f{9}{4}\psi^2 |x|^{2-\al}\df x\df t,
\end{equation*}
where the constant $C>0$ depends only on $\al$ and $T$.
Finally,
\begin{equation*}
	\begin{split}
		(P_{12}\psi+P_{13}\psi, P_{23}\psi)_{L^2(Q_\de)}=(2-\al)^4s^3\iint_{Q_\de} \Theta^3\psi^2|x|^{2-\al}\df x\df t.
	\end{split}
\end{equation*}
Choosing $s_0\geq 1$ sufficiently large, depending only on $\al, N, \ga$, and $T$, we obtain for all $s\geq s_0$ that
\begin{equation*}
	\begin{split}
		&(P_1\psi,P_2\psi)_{L^2(Q_\de)}\\
		&\geq -(2-\al)s\iint_{\Ga^+\ts (0,T)}\Theta (w\nabla\psi\cdot\nabla\psi)(x\cdot\nu)\df S\df t\\
		&\hspace{4.5mm}+(2-\al)^2 s\iint_{Q_\de}\Theta (w\nabla\psi\cdot\nabla\psi)\df x\df t+(2-\al)^4s^3\iint_{Q_\de} \Theta^3\psi^2|x|^{2-\al}\df x\df t
	\end{split}
\end{equation*}
where we also used Lemma \ref{01.11.L1} and \eqref{01.11.29}, to infer that
\begin{equation*}
	\begin{split}
		s\ga\iint_{Q_\de} \Theta^\f{3}{2} \psi^2\df x\df t
		&\leq s^2\ga^2\iint_{Q_\de} \Theta^2\psi^2|x|^{2-\al}\df x\df t+\iint_{Q_\de} \Theta \psi^2|x|^{\al-2}\df x\df t\\
		&\leq s^2\ga^2\iint_{Q_\de} \Theta^2\psi^2|x|^{2-\al}\df x\df t+C\iint_{Q_\de} \Theta (w\nabla\psi\cdot\nabla\psi)\df x\df t, 
	\end{split}
\end{equation*}
where the constant $C>0$ depends only on $\al$ and $N$. Combining this estimate with \eqref{01.11.25}, we obtain
\begin{equation}\label{01.12.4}
	\begin{split} 
	&s\iint_{Q_\de}\Theta (w\nabla\psi\cdot\nabla\psi)\df x\df t+s^3\iint_{Q_\de} \Theta^3\psi^2|x|^{2-\al}\df x\df t\\
	&\leq C\|e^{-s\xi}f_\de\|_{L^2(Q_\de)}^2+Cs\iint_{\Ga^+\ts (0,T)} \Theta (w\nabla\psi\cdot\nabla\psi)(x\cdot\nu)\df S\df t
	\end{split} 
\end{equation}
for all $s\geq s_0$, where the constants $s_0\geq 1$ and $C>0$ depend only on $\al, T, N$, and $\ga$.

Finally, \eqref{01.12.4}, \eqref{01.06.18}, and \eqref{01.11.29} yield
\begin{equation}\label{01.12.7}
	\begin{split} 
	&s\iint_{Q_\de} \Theta (|x|^\al\nabla \vp_\de\cdot \nabla \vp_\de)e^{-2s\xi}\df x\df t+s^3\iint_{Q_\de}\Theta^3\vp_\de^2|x|^{2-\al}e^{-2s\xi}\df x\df t\\
	&\leq C\|e^{-s\xi}f_\de\|_{L^2(Q_\de)}^2+Cs\iint_{\Ga^+\ts (0,T)} \Theta (w\nabla\vp_\de\cdot\nabla\vp_\de)(x\cdot\nu)e^{-2s\xi}\df S\df t
	\end{split} 
\end{equation}
for all $s\geq s_0$, where the constants $s_0\geq 1$ and $C>0$ depend only on $\al, T, N$, and $\ga$.

\subsection{Approximation} 

Let $\vp$ be the solution of the backward equation associated with \eqref{12.14.1}:
\begin{equation}\label{01.12.8}
	\begin{cases}
		\pt_t\vp-\mcA \vp=0, &\mbox{in }Q, \\
			\vp=0, &\mbox{on }\pt Q, \\
		\vp(T)=\vp^T, &\mbox{in }\Om, 
	\end{cases}
\end{equation}
where $\vp^T\in C_0^\iy(\Om)$. Let $\vp_\de$ be the solution of \eqref{01.06.19} with respect to $(\vp^T, 0)$ for each $\de\in (0,\dist(\supp\vp^T,\pt\Om))$.

From \eqref{01.12.7}, \eqref{01.11.18}, and \eqref{01.12.5}, together with the boundedness of $\Theta e^{-s\xi}$, $\Theta^3|x|^{2-\al}e^{-2s\xi}$, and $\Theta (x\cdot\nu)e^{-2s\xi}$, we obtain
\begin{equation}\label{01.12.6}
	\begin{split} 
	&s\iint_{Q} \Theta (|x|^\al\nabla \vp\cdot \nabla \vp)e^{-2s\xi}\df x\df t+s^3\iint_{Q}\Theta^3\vp^2|x|^{2-\al}e^{-2s\xi}\df x\df t\\
	&\leq Cs\iint_{\Ga^+\ts (0,T)} \Theta (w\nabla\vp\cdot\nabla\vp)(x\cdot\nu)e^{-2s\xi}\df S\df t
	\end{split} 
\end{equation}
 for all $s\geq s_0$, where the constants $s_0\geq 1$ and $C>0$ depend only on $\al, T, N$, and $\ga$.

Finally, let $\vp^T\in H_0^1(\Om;w)$. Choose a sequence $\vp_n^T\in C_0^\iy(\Om)$ such that $\vp_n^T\ra \vp^T$ strongly in $H_0^1(\Om;w)$ as $n\ra\iy$, and let $\vp_n$ be the solution of \eqref{01.12.8} with respect to $(\vp_n^T,0)$. Applying the same estimate as \eqref{01.11.8} to the difference $\vp_n-\vp$, we obtain
\begin{equation*}
	\vp_n\ra \vp \mbox{ strongly in } L^2(0,T; D(\mcA)), 
\end{equation*}
and 
\begin{equation*}
	\f{\pt\vp_n}{\pt\nu}\ra \f{\pt\vp}{\pt\nu} \mbox{ strongly in } L^2(0,T; L^2(\Ga^+))
\end{equation*}
by the standard Sobolev trace theorem on $\Om-B(0,R_0)$.
	Estimate \eqref{01.12.6} holds for each $\vp_n$. Letting $n\ra\iy$, we infer that \eqref{01.12.6} also holds for $\vp$. This proves the following theorem.
 
\begin{theorem}\label{01.12.T1}
	Let $\vp^T\in H_0^1(\Om;w)$, and let $\vp$ be the solution of \eqref{01.12.8} with respect to $(\vp^T,0)$. Then there exist constants $s_0\geq 1$ and $C>0$, depending only on $\al$, $N$, $T$, and $\ga$, such that, for all $s\geq s_0$, we have
	\begin{equation}\label{01.12.9}
		\begin{split} 
			&s\iint_{Q} \Theta (|x|^\al\nabla \vp\cdot \nabla \vp)e^{-2s\xi}\df x\df t+s^3\iint_{Q}\Theta^3\vp^2|x|^{2-\al}e^{-2s\xi}\df x\df t\\
			&\leq Cs\iint_{\Ga^+\ts (0,T)} \Theta w\left(\f{\pt \vp}{\pt\nu}\right)^2(x\cdot\nu)e^{-2s\xi}\df S\df t.
		\end{split} 
	\end{equation}
\end{theorem}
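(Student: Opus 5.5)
The plan is to assemble the pieces already prepared in this section: first prove the estimate for smooth compactly supported terminal data, using the approximate Carleman estimate \eqref{01.12.7} and the shape-design convergence of Theorem~\ref{01.11.T3}; then extend to $\vp^T\in H_0^1(\Om;w)$ by density. Two remarks guide the setup. Time reversal $t\mapsto T-t$ turns \eqref{01.12.8} into \eqref{12.14.1} with $f=0$, and turns \eqref{01.06.19} into \eqref{01.11.14}, so Theorem~\ref{01.11.T3} applies to $\vp_\de$ and $\vp$. The weight $\Theta$ is symmetric under this reversal, so the weights are unaffected.

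\emph{Step 1 (smooth data).} Take $\vp^T\in C_0^\iy(\Om)$ and $\de<\min\{\de_0,\dist(\supp\vp^T,\pt\Om)\}$, and apply \eqref{01.12.7} with $f_\de=0$. Since $\ga=\|\eta\|_{L^\iy(\Om)}+1$ does not depend on $\de$, the constants $s_0$ and $C$ are uniform in $\de$. Fix $s\ge s_0$.

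On the left-hand side, write the integrals over $Q$ using $E\vp_\de$. The weights $s\Theta e^{-2s\xi}$ and $s^3\Theta^3|x|^{2-\al}e^{-2s\xi}$ are bounded on $\ol Q$ (they vanish at $t=0,T$ because $\xi\ge\Theta$). Weak convergence $E\vp_\de\to\vp$ in $L^2(0,T;H_0^1(\Om;w))$ means $\sqrt{s\Theta e^{-2s\xi}}\,|x|^{\al/2}\nabla E\vp_\de$ converges weakly in $L^2(Q)$. Weak lower semicontinuity of the norm then gives the $\liminf$ inequality for the gradient term. The zero-order term converges outright by the strong $L^2(Q)$ convergence in \eqref{01.11.18}.

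On the right-hand side, note that $\vp_\de=0$ on $\pt\Om_\de$, which coincides with $\pt\Om$ outside $B(0,R_0)$ and contains $\Ga^+$. Hence $\nabla\vp_\de=\f{\pt\vp_\de}{\pt\nu}\nu$ there and $w\nabla\vp_\de\cdot\nabla\vp_\de=w(\f{\pt\vp_\de}{\pt\nu})^2$. The weight $\Theta w(x\cdot\nu)e^{-2s\xi}$ is bounded on $\Ga^+\ts(0,T)$, so the strong convergence \eqref{01.12.5} in $L^2(0,T;L^2(\Ga^+))$ passes the boundary integral to the limit. This yields \eqref{01.12.9} for smooth data.

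\emph{Step 2 (density).} For $\vp^T\in H_0^1(\Om;w)$, pick $\vp_n^T\in C_0^\iy(\Om)$ with $\vp_n^T\to\vp^T$ in $H_0^1(\Om;w)$. Apply \eqref{01.11.8}, reversed in time, to $\vp_n-\vp$. This gives $\vp_n\to\vp$ in $C([0,T];H_0^1(\Om;w))$, and also in $L^2(0,T;H^2(\Om-B(0,R_0)))$. By the trace theorem on the smooth region $\Om-B(0,\f12R_0)$, this yields $\f{\pt\vp_n}{\pt\nu}\to\f{\pt\vp}{\pt\nu}$ in $L^2(\Ga^+\ts(0,T))$. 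All weights are bounded, so both sides of \eqref{01.12.9} converge and the inequality persists.

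\emph{Main obstacle.} The delicate point is Step 1. The uniform Carleman estimate only controls $\nabla\vp_\de$ weakly, so it must be handled by lower semicontinuity rather than convergence. The boundary term, by contrast, requires genuinely strong convergence of normal traces. That strong convergence is exactly what \eqref{01.12.5} provides, and it relies on the extra time regularity from Corollary~\ref{01.12.C2}; this is why smooth compactly supported data are used first. One must also check that Assumption~\ref{Assumption (H)} places $\Ga^+$ away from the modified region. This follows from $\Om_\de-B(0,\f12R_0)=\Om-B(0,\f12R_0)$ together with $\Ga^+\s\pt\Om-B(0,R_0)$, so the boundary integrals over $\Ga^+$ for $\vp_\de$ and for $\vp$ are over the same set.
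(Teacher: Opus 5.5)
Your proposal is correct and follows the paper's own argument. For $\vp^T\in C_0^\iy(\Om)$ you pass to the limit $\de\to0$ in the $\de$-uniform Carleman estimate \eqref{01.12.7} using Theorem~\ref{01.11.T3}, after the time reversal $t\mapsto T-t$, under which $\Theta$ and $\xi$ are invariant. You then extend to $\vp^T\in H_0^1(\Om;w)$ by density via \eqref{01.11.8} and the trace theorem on $\Om-B(0,R_0)$, exactly as the paper does. Your additional details fill in what the paper's appeal to ``boundedness of the weights'' leaves implicit: weak lower semicontinuity for the gradient term, strong $L^2(Q)$ convergence for the zero-order term, and the identity $w\nabla\vp_\de\cdot\nabla\vp_\de=w(\pt\vp_\de/\pt\nu)^2$ on $\Ga^+$ combined with the strong trace convergence \eqref{01.12.5}.
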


\subsection{Boundary Observability for the Limiting Equation}

Let $s_0\geq 1$ be as in Theorem \ref{01.12.T1}. Since $1\leq \ga -\eta \leq |\eta|_\iy+1$ on $\Om$, we have
\begin{equation*}
	\Theta e^{-2s_0\xi}\geq C \mbox{ on } \Om\ts \left(\f{1}{4}T, \f{3}{4}T\right), \quad \Theta^3 e^{-2s_0\xi}\geq C \mbox{ on } \Om\ts \left(\f{1}{4}T, \f{3}{4}T\right), 
\end{equation*}
and 
\begin{equation*}
	\Theta e^{-2s_0\xi}\leq C \mbox{ on }Q, 
\end{equation*}
Hence,
\begin{equation*}
	\int_{\f{1}{4}T}^{\f{3}{4}T}\int_\Om |x|^\al \nabla\vp\cdot\nabla\vp \df x\df t+\int_{\f{1}{4}T}^{\f{3}{4}T}\int_\Om |x|^{2-\al}\vp^2\df x\df t\leq C\iint_{\Ga^+\ts (0,T)}\left(\f{\pt \vp}{\pt\nu}\right)^2\df S\df t,
\end{equation*}
where the constant $C>0$ depends only on $\al, T, N, M, \ga$, and $\sup_{x\in \pt\Om} x\cdot\nu$.
Consequently,
\begin{equation*}
	\begin{split} 
	\int_{\f{1}{4}T}^{\f{3}{4}T}\int_\Om \vp^2\df x\df t
	&\leq \int_{\f{1}{4}T}^{\f{3}{4}T}\int_\Om \vp^2|x|^{2-\al}\df x\df t+\int_{\f{1}{4}T}^{\f{3}{4}T}\int_\Om\vp^2|x|^{\al-2}\df x\df t\\
	&\leq \int_{\f{1}{4}T}^{\f{3}{4}T}\int_\Om\vp^2|x|^{2-\al}\df x\df t+C\int_{\f{1}{4}T}^{\f{3}{4}T}\int_\Om|x|^\al \nabla\vp\cdot\nabla\vp\df x\df t\\
	&\leq C\iint_{\Ga^+\ts (0,T)}\left(\f{\pt \vp}{\pt\nu}\right)^2\df S\df t
	\end{split}
		\end{equation*}
		by Lemma \ref{08.15.L1}, where the constant $C>0$ depends only on $\al, T, N, M, \ga$, and $\sup_{x\in \pt\Om} x\cdot\nu$.
		On the other hand, multiplying \eqref{01.12.8} by $\vp$, using $\vp^T\in H_0^1(\Om;w)$, and integrating over $\Om\ts(0,t)$, we obtain
	\begin{equation*}
		0=\f{1}{2}\|\vp(t)\|_{L^2(\Om)}^2-\f{1}{2}\|\vp(0)\|_{L^2(\Om)}^2-\int_0^t\int_\Om |x|^\al \nabla\vp\cdot\nabla\vp\df x\df t.
	\end{equation*}
	Thus,
\begin{equation*}
	\|\vp(0)\|_{L^2(\Om)}\leq \|\vp(t)\|_{L^2(\Om)} \mbox{ for all } t\in [0,T]. 
\end{equation*}
Therefore,
\begin{equation}\label{01.12.10}
	\begin{split} 
	\int_\Om \vp^2(0)\df x\leq \f{2}{T}\int_{\f{1}{4}T}^{\f{3}{4}T}\int_\Om \vp^2\df x\df t
	&\leq C\iint_{\Ga^+\ts (0,T)} \left(\f{\pt\vp}{\pt\nu}\right)^2\df S\df t,
	\end{split} 
\end{equation}
where the constant $C>0$ depends only on $\al, T, N, M, \ga$, and $\sup_{x\in \pt\Om} x\cdot\nu$.

Finally, let $\vp^T\in L^2(\Om)$, and write
\[
\vp^T=\sum_{n=1}^\iy(\vp^T, \Phi_n)_{L^2(\Om)}\Phi_n.
\]
Then
\[
\vp(t)=\sum_{n=1}^\iy (\vp^T,\Phi_n)_{L^2(\Om)}e^{-\la_n(T-t)}\Phi_n
\]
is the weak solution of \eqref{01.12.8} with respect to $(\vp^T, 0)$, and $\vp(t)\in H_0^1(\Om;w)$ for all $t\in [0,T)$. In particular,
\[
\vp\left(\f{1}{2}T\right)=\sum_{n=1}^\iy (\vp^T,\Phi_n)_{L^2(\Om)}e^{-\frac{1}{2}\la_n T}\Phi_n\in H_0^1(\Om;w).
\]
We may therefore apply the previous argument on the interval $\Om\ts \left(0,\f{1}{2}T\right)$ to the same solution $\vp$, and obtain
\begin{equation*}
	\begin{split}
		\int_\Om\vp^2(0)\df x
		&\leq C\iint_{\Ga^+\ts \left(0,\f{1}{2}T\right)}  \left(\f{\pt\vp}{\pt \nu}\right)^2\df S\df t\leq C\iint_{\Ga^+\ts (0,T)}  \left(\f{\pt\vp}{\pt\nu}\right)^2\df S\df t, 
	\end{split}
\end{equation*}
where the constant $C>0$ depends only on $\al, T, N, M, \ga$, and $\sup_{x\in \pt\Om} x\cdot\nu$.
This proves the following theorem.
	
\begin{theorem}\label{01.12.T2}
	Let $\al\in (0,1)$ and $\vp^T\in L^2(\Om)$. Then the following boundary observability inequality holds
	\begin{equation}\label{01.12.11}
		\int_\Om\vp^2(0)\df x
		\leq C\iint_{\Ga^+\ts (0,T)} \left(\f{\pt\vp}{\pt\nu}\right)^2\df S\df t, 
	\end{equation}
		where $\vp$ is the weak solution of \eqref{01.12.8} with respect to $(\vp^T,0)$, and the constant $C>0$ depends only on $\al, T, N, M, \ga$, and $\sup_{x\in \pt\Om} x\cdot\nu$.
\end{theorem}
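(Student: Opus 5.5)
The plan is to obtain \eqref{01.12.11} from the Carleman estimate of Theorem \ref{01.12.T1}, using dissipation of the backward equation and a density/time-shift argument for rough data.

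\textbf{Step 1: weighted interior bound from Theorem \ref{01.12.T1}.} First take $\vp^T\in H_0^1(\Om;w)$ and fix $s=s_0$ in \eqref{01.12.9}. Since $1\le \ga-\eta\le \ga$ on $\Om$ and $\Theta$ is bounded above and below on $(\f14T,\f34T)$, the weights $\Theta e^{-2s_0\xi}$ and $\Theta^3e^{-2s_0\xi}$ are bounded below by a positive constant on $\Om\ts(\f14T,\f34T)$. On all of $Q$, the boundary weight $\Theta w (x\cdot\nu)e^{-2s_0\xi}$ is bounded above, because $\Theta e^{-2s_0\Theta}$ is bounded, $w\le M^\al$, and $x\cdot\nu\le\sup_{\pt\Om}x\cdot\nu$. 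This gives
\[
\int_{\f14T}^{\f34T}\!\!\int_\Om \big(|x|^\al|\nabla\vp|^2+|x|^{2-\al}\vp^2\big)\df x\df t\le C\iint_{\Ga^+\ts(0,T)}\Big(\f{\pt\vp}{\pt\nu}\Big)^2\df S\df t .
\]

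\textbf{Step 2: removing the weight and propagating to $t=0$.} To drop the weight, split $\vp^2\le |x|^{2-\al}\vp^2+|x|^{\al-2}\vp^2$, which holds since either $|x|\ge1$ or $|x|\le 1$. Then bound the second term by the Hardy inequality of Lemma \ref{08.15.L1}, applied at a.e.\ $t$. This controls $\int_{T/4}^{3T/4}\|\vp(t)\|^2_{L^2(\Om)}\df t$ by the observation. Next, multiply the backward equation by $\vp$ and integrate over $\Om\ts(0,t)$. This shows that $t\mapsto\|\vp(t)\|_{L^2(\Om)}$ is nondecreasing, so $\|\vp(0)\|^2\le \f2T\int_{T/4}^{3T/4}\|\vp(t)\|^2\df t$, and \eqref{01.12.11} follows for $\vp^T\in H_0^1(\Om;w)$.

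\textbf{Step 3: general data $\vp^T\in L^2(\Om)$.} Use the spectral representation $\vp(t)=\sum_n(\vp^T,\Phi_n)e^{-\la_n(T-t)}\Phi_n$. For $t<T$ this gives $\vp(t)\in H_0^1(\Om;w)$, and in particular $\vp(\f T2)\in H_0^1(\Om;w)$. Apply Steps 1--2 on the interval $(0,\f T2)$ with terminal datum $\vp(\f T2)$; the constants now depend on $T/2$, which is harmless. This yields $\|\vp(0)\|^2\le C\iint_{\Ga^+\ts(0,T/2)}(\pt_\nu\vp)^2\le C\iint_{\Ga^+\ts(0,T)}(\pt_\nu\vp)^2$.

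\textbf{Main obstacle.} The delicate point is the legitimacy of Theorem \ref{01.12.T1} itself, namely passing from the approximate problems to the limit, where the boundary term near $0$ is discarded thanks to Assumption \ref{Assumption (H)}. Given that theorem, the step needing the most care is Step 3: one must check that the normal trace on $\Ga^+$ is meaningful on $(0,\f T2)$. This follows from the interior $H^2(\Om-B(0,R_0))$ regularity of \eqref{01.11.8} applied on time-shifted intervals.
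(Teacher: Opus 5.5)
Your proposal is correct and follows the paper's proof essentially step for step. You fix $s=s_0$ in Theorem \ref{01.12.T1} and use two-sided weight bounds on $\Om\times(\frac14T,\frac34T)$. You then remove the weight via $1\le |x|^{2-\alpha}+|x|^{\alpha-2}$ together with Hardy's inequality, and use the energy identity to show that $t\mapsto\|\vp(t)\|_{L^2(\Om)}$ is nondecreasing. For $L^2$ terminal data you use the spectral representation and restart on $(0,\frac12T)$, where $\vp(\frac12T)\in H_0^1(\Om;w)$. Your remark that the normal trace on $\Ga^+\times(0,\frac12T)$ is meaningful by the $H^2(\Om-B(0,R_0))$ regularity in \eqref{01.11.8} makes explicit a point the paper leaves implicit.
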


\end{document}